\documentclass[12pt,a4paper]{amsart}
\usepackage{amssymb}
\usepackage{mathrsfs}
\usepackage{amsfonts}
\usepackage[active]{srcltx}
\usepackage{enumerate}
\usepackage{color}

\usepackage{amsmath,amssymb,xspace,amsthm}

\setlength{\parskip}{2ex}\newtheorem{theorem}{Theorem}[section]
\setlength{\oddsidemargin}{0.in}
\setlength{\evensidemargin}{0.in}
\setlength{\textwidth}{6.5in}\newtheorem{lemma}[theorem]{Lemma}
\setlength{\topmargin}{-0.5in}
\setlength{\textheight}{9in}

\newtheorem{definition}{Definition}[section]
\numberwithin{equation}{section}

\newcommand{\ad}{\operatorname{ad}\xspace}

\newcommand{\C}{\ensuremath{\mathbb C}\xspace}

\renewcommand{\a}{\ensuremath{\alpha}}
\renewcommand{\b}{\ensuremath{\beta}}

\renewcommand{\l}{\ensuremath{\lambda}}

\newcommand{\h}{\ensuremath{\mathfrak{h}}}

\newcommand{\Z}{\ensuremath{\mathbb{Z}}\xspace}
\newcommand{\N}{\ensuremath{\mathbb{N}}\xspace}

\newcommand{\W}{\ensuremath{\mathcal{W}}\xspace}

\renewcommand{\geq}{\geqslant}

\newcommand{\p}{\partial}

\def\BB{\mathcal{B}}

\def\fg{\mathfrak{g}}
\def\fh{\mathfrak{h}}
\def\sl{\mathfrak{sl}}
\def\gl{\mathfrak{gl}}

\newcommand{\bl}{\lambda\hskip-6pt\lambda}
\newcommand{\bmu}{\mu\hskip-6.5pt\mu}
\newcommand{\bm}{{\bf m}}
\newcommand{\bn}{{\bf n}}

\newcommand{\bo}{{\bf 0}}

\newcommand{\bp}{\partial\hskip-6.5pt\partial}
\hfuzz5pc

\begin{document}

\title[$U(\fh)$-free modules over the Block algebra $\BB(q)$]
{$U(\fh)$-free modules over the Block algebra $\BB(q)$}

\author{Xuewen Liu and Xiangqian Guo}

\date{} \maketitle

\begin{abstract}
In this paper, we construct a new class of modules over the Block algebra $\BB(q)$, 
where $q$ is a nonzero complex number. We determined the irreducibilities of these modules
and the isomorphisms among them.
\end{abstract}

\vskip 5pt \noindent{\em Keywords:} Block algebra, irreducible modules, $U(\fh)$-free modules.
\vskip 5pt

\section{Introduction}

Classification is one of the main objects in many areas of mathematical studies.
In Lie algebra theory, it is natural to classify certain Lie algebras with good properties
or to classify certain modules over a particular or a family of Lie algebras. 
The most famous works on the classification on complex Lie algebras are the classifications of simple finite-dimensional Lie algebras, Kac-Moody algebras, simple finite-dimensional Lie superalgebras, 
simple Lie algebras with finite growth and so on.
As to the classifications on modules of Lie algebras, there are also many important results, 
such as the classifications of simple Harish-Chandra modules (weight modules with finite-dimensional weight spaces) for various Lie algebras including the finite-diemnsional Lie algebras (\cite{Mat2}), Virasoro algebras (\cite{Mat3}), generalized Virasoro algebras (\cite{S1, LZ1, GLZ1}), Witt algebras (\cite{BF}), 
(generalized) Heisengerg-Virasoro algebras (\cite{LZ2, LG}), loop Virasoro algebra (\cite{GLZ2}), Lie algebras of Block type (\cite{S2, S3, SXX, CGZ}) and so on. 

It is natural to consider the problem of classifications of all simple weight modules and even all simple modules. However, this problem is so hard that people do not believe this aim can be achieved for any nontrivial Lie algera
until Block obtained the classification of simple modules for the $3$-dimensional Lie algebra $\sl_2$ and some related Lie algebras in \cite{Bl} in 1979.
Now, due to the knowledge of the authors, there are only very few Lie algebras whose simple modules are be classified (c.f. \cite{MZ1}), depending on Block's result. 
As to the problem for simple weight modules, the situation may be a little better. 
But there are still not many results, and the only examples besides those in \cite{Bl} and \cite{MZ1}
is the so called spatial aging algebra (see \cite{LMZ}).

Since it is almost impossible to classify all simple modules for general Lie algebras, 
one turns to consider similar problems for certain families of simple modules with good conditions.
For example Whittaker modules were constructed for 
finite dimensional simple Lie algebras (\cite{Ko}), affine Lie algebras (\cite{ALZ}), 
quantum groups (\cite{BO, O}), 
(generlized) Virasoro algebras (\cite{OW, Y, GL1, GLZ2}) and some other algebras 
(see \cite{BM, GL2, LiG, LWZ} and references therein),
and in most cases the simple Whittaker modules are classified. 
Other authors classified another classes of simple modules on which the action of positive part is locally nilpotent or locally finite for some Lie algebras including 
Virasoro algebras (\cite{MZ2}), Heisenberg-Virasoro algebra (\cite{MZ2, CG}), affine algebras (\cite{MZ2, GZ1, GZ2}).  

Recently, another class of modules, which many authors call ``$U(\fh)$-free modules", 
were introduced and studied.
Some classification results and other results are given for such modules over various Lie algebras.
Let us first recall some notations.

Throughout this paper, all algebras and modules are considered over the complex field, 
though many results still hold for arbitrary fields of characteristic $0$.
Let $\fg$ be a Lie algebra with Cartan subalgebra $\fh$, that is,
$\fh$ is a maximal abelian subalgebra and acts on $\fg$ semisimply. 
A $\fg$-module is called a weight module if $\fh$ acts on the module semisimply.
If we replace the condition of semi-simplicity by an opposite condition that $\fh$ acts freely on the module,
we get the $U(\fh)$-modules. 
More precisely, let $M$ be a $\fg$-module such that $M$ is a $U(\fh)$-free module of rank-$r$
via the restriction action of $U(\fh)$,
then we call $M$ a $U(\fh)$-free $\fg$-module of rank $r$. Note that $r$ may be a nonnegative integer or 
$+\infty$.

The notation of $U(\fh)$-free modules was first introduced by \cite{N1} for the simple Lie algebra
$\sl_n$. In this paper and a subsequent paper \cite{N2},
Nilsson showed that a finite dimensional simple Lie algebra has nontrivial $U(\fh)$-free modules 
if and only if it is of type $A$ or $C$. Moreover, he also gave a classification of $U(\fh)$-free modules of rank $1$ in these cases.

Almost at the same time, the $U(\fh)$-free $\sl_n$-modules were also studied in \cite{TZ1}, 
from a different point of view. 
They constructed a large class of $U(\fh)$-free $\sl_{n+1}$-modules from certain modules 
over Witt algebras $\W_{n}$, without mentioning the ``$U(\fh)$-free" feature.
The above mentioned $\W_n$-modules are also $U(\fh)$-free modules and
were first constructed from Weyl algebra modules in \cite{LZ3}.
For the special case $\W_1$ (centerless Virasoro algebra), 
the $U(\fh)$-free modules first appeared in an earlier paper \cite{GLZ3} as quotients of fraction modules.

Recently, many authors investigated the $U(\fh)$-free modules for various other Lie algebras.
In 2016, the $U(\fh)$-free modules of rank-$1$ for the Kac-Moody Lie algebras are determined in \cite{CTZ}.
In fact, there are no nontrivial $U(\fh)$-free modules of rank-$1$ for affine type and indefinite type,
so only the finite type algebras are left, which has been solved in Nilsson's paper.
Moreover, $U(\fh)$-free modules of rank-$1$ are also classified for the Witt algebras $\W_n$, 
simple finite-dimensional Lie superalgebras, Cartan type Lie algebras, Heisenberg-Virasoro algebra, W algebra $\W(2,2)$,
the algebras $\mathcal{V}ir(a,b)$ and so on (see \cite{TZ2, CZ, CG2, HCS, Z}).

There are also some results on $U(\fh)$-free modules of higher rank.
For example, a family of $U(\fh)$-free $\sl_2$-modules of arbitrary rank were constructed in \cite{MP},
while $U(\fh)$-free modules of higher rank for Witt algebras $\W_n$ are studied and described using 
Weyl algebra modules in \cite{LLZ, GLLZ}. 
In \cite{CG3}, $U(\fh)$-modules of infinite rank for Virasoro algebra were constructed from 
rank-$1$ $U(\fh)$-modules of the W algebra $\W(2,2)$. 
It is worthwhile to mention that irreducibilities of the tensor products of such modules and 
the modules in \cite{MZ1} are determined in \cite{LGW1, LGW2}, 
which generalize the similar results in \cite{TZ3, TZ4} of the tensor products of $U(\fh)$-free Virasoro modules of rank-$1$ and the modules in \cite{MZ1}.

In the present paper, we will consider the $U(\fh)$-free modules for the Block algebra $\BB(q)$, $q\in\C$, see Definition \ref{def}.
%
%
The Block algebra $\BB(q)$ is simple for generic $q\in\C$. 
More precisely, if $2q\notin\Z$, then $\BB(q)$ itself is a simple Lie algebra and otherwise, the quotient algebra $[\BB(q),\BB(q)]/{\mathcal Z}(\BB(q))$ is simple, where $[\BB(q),\BB(q)]$ is a codimensional-$1$ subalgebra and ${\mathcal Z}(\BB(q))$ is the $1$-dimensional or trivial center of $\BB(q)$. 
These Lie algebras provided a large classes of infinite dimensional simple Lie algebras with quite simple structures, in fact, only the Virasoro algebra has simpler structure. 

The structure and representation theory for the Block algebras $\BB(q)$ or its subalgebra 
$\BB_+(q)$ (See Definition \ref{def}) are widely investigated. The derivations and biderivations of $\BB(q)$ are completely determined 
in \cite{DZ, LGZ}. Irreducible quasi-finite modules of $\BB_+(q)$ are classified in a sequences of papers
(See \cite{SXX,CGZ,WT}). Quasi-finite modules of the algebra $\BB(q)$ for some special $q$ are also studied 
(see, for example, \cite{SY, GGS} and references therein). 

In the present paper, we will construct the $U(\fh)$-free modules for the Block algebras $\BB(q)$, which provide a large class of new simple nongraded module for $\BB(q)$. 
This paper is organized as follows. In section 2, we introduce the basic notations,
recall some known results on Witt algebra $\W_1$ and construct the $U(\fh)$-modules over $\BB(q)$.
We also determine the irreducibility of these modules and the isomorphisms among them. 
In fact, each such module has a unique nonzero submodule of codimension $1$.
In Section 3, we show that these modules exhaust all $U(\fh)$-free $\BB(q)$-modules of rank-$1$.

\section{Preliminaries results}\label{pre}

Denote by $\Z$, $\N$, $\Z_+$ and $\C$ the sets of integers, positive
integers, nonnegative integers and complex numbers respectively. 
Throughout this paper, all algebras, modules and vector spaces are over $\C$.
Let us first recall the definition of the Block Lie algebras.

\begin{definition}\label{def} Let $q\in\C$. 
The Block Lie algebra $\BB(q)$ is the Lie algebra with a basis $\{L_{\bn}| \bn\in\Z^2\}$ subject to the following Lie brackets
\begin{equation*}
[L_{\bm},L_{\bn}]=(n_1(m_2+q)-m_1(n_2+q))L_{\bm+\bn}, \ \forall\ \bm,\bn\in\Z^2,
\end{equation*}
here and after we always use $m_1, m_2$ to denote the entries of $\bm\in\Z^2$, i.e., 
$\bm=(m_1,m_2)$ and similar for $\bn$.
Denoted by $\BB_+(q)$ the subalgebra spanned by $L_{m,i}, m\in\Z, i\in\Z_+$.
\end{definition}

Note that the Lie algebras $\BB(q), q\neq0$, are in fact some special cases of generalized 
Block algebras studied in \cite{DZ}.
The Block algebra $\BB(q)$ is simple if $2q\not\in\Z$ and otherwise the quotient algebra
$[\BB(q),\BB(q)]/\C L_{0,-q}$ is simple. 
These algebras are among the infinite dimensional simple Lie algebras with simplest structures.

To define the $U(\fh)$-free modules for the algebra $\BB(q)$, we need first to extend $\BB(q)$ by adding 
the degree derivations
 $\p_1$ and $\p_2$  defined by
$$\p_1(L_{\bm})=m_1L_{\bm}, \  \  \p_2(L_{\bm})=m_2L_{\bm}, \forall\ \bm=(m_1,m_2)\in\Z^2.$$
Noticing $\ad L_\bo=q\p_1$, 
we denote $\BB'(q)=\BB(q)\ltimes(\C\p_1\oplus\C\p_2)$ if $q=0$ and $\BB'(q)=\BB(q)\ltimes\C\p_2$ if $q\neq0$. The Lie bracket of $\BB'(q)$ is just 
$$[\p+L_{\bm},\p'+L_{\bn}]=\p(L_{\bn})-\p'(L_{\bm})+[L_\bm,L_{\bn}],\ \forall\ \bm,\bn\in\Z^2, 
\p, \p'\in\C\p_1\oplus\C\p_2.$$

Obviously, $\fh=\C\p_1\oplus\C\p_2$ is the Cartan subalgebra of $\BB'(q)$. 
A $\BB'(q)$-module $M$ is called a 
$U(\fh)$-free module of rank $r$ if $M$ is a free $U(\fh)$-module of rank $r$ via restriction action; in this case, we also call $M$ a $U(\fh)$-free $\BB(q)$-module of rank $r$ by abuse of language.
When $q=0$, the algebra $\BB'(0)$ is the direct sum of the Virasoro-like algebra (also known as the Lie algebra of divergence zero vector fields on a $2$-dimensional torus) and a $1$-dimensional center $\C L_{\bo}$. 
The $U(\fh)$-free $\BB'(0)$-modules of rank $1$ are already characterized in a recent paper \cite{Z}, 
where $U(\fh)$-free modules of rank $1$ over Cartan type algebras are studied. So we always assume $q\neq0$ in the rest of the present paper.

The algebras $\BB(q)$ and $\BB'(q)$ have natural subalgebras isomorphic to the (rank-$1$) Witt algebra, 
also known as the centerless Virasoro algebra. 
The Witt algebra $\W=\W_1$ is the derivation Lie algebra of the Laurent polynomial algebra in one variable. More precisely, $\W=\bigoplus_{i\in\Z}\C d_i$ as a vector space and its Lie bracket is given by
\begin{equation*}
[d_{i},d_{j}]=(j-i)d_{i+j}, \ \forall\ i,j\in\Z.
\end{equation*}

By straightforward calculations, we can show
\begin{lemma}\label{witt} For any $\bm=(m_1,m_2)\in\Z^2$ with $m_1\neq0$, denote 
$d_{i\bm}=\frac{1}{m_1q}L_{i\bm}$ for all $i\in\Z$.
Then $\W_\bm=\bigoplus_{i\in\Z}\C d_{i\bm}\subseteq\BB(q)$ form a subalgebra isomorphic to the 
Witt algebra $\W$.
\end{lemma}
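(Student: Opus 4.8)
The plan is to exhibit the obvious candidate isomorphism $\phi\colon\W\to\W_\bm$, $\phi(d_i)=d_{i\bm}$, and to verify that it preserves brackets; since $\W_\bm$ is defined as a subspace of $\BB(q)$, closure under the bracket (hence the subalgebra claim) comes for free once the bracket of two spanning elements is computed. First I would fix $\bm=(m_1,m_2)$ with $m_1\neq0$ and record that, because we are assuming $q\neq0$, the scalar $m_1 q$ is nonzero, so each $d_{i\bm}=\tfrac{1}{m_1 q}L_{i\bm}$ is well defined.

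The heart of the argument is a single bracket computation. Writing $i\bm=(im_1,im_2)$ and $j\bm=(jm_1,jm_2)$ and applying the defining relation of $\BB(q)$, the structure coefficient is
\[
(jm_1)(im_2+q)-(im_1)(jm_2+q)=(j-i)m_1 q,
\]
the cross terms $ij\,m_1 m_2$ cancelling. Hence $[L_{i\bm},L_{j\bm}]=(j-i)m_1 q\,L_{(i+j)\bm}$, and after dividing by $(m_1 q)^2$ this becomes $[d_{i\bm},d_{j\bm}]=(j-i)d_{(i+j)\bm}$, which is exactly the Witt relation $[d_i,d_j]=(j-i)d_{i+j}$.

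It then remains to check that $\phi$ is a linear bijection onto $\W_\bm$. Linearity is immediate, and surjectivity holds by the very definition $\W_\bm=\bigoplus_{i\in\Z}\C d_{i\bm}$. For injectivity I would note that $m_1\neq0$ forces the indices $i\bm$ to be pairwise distinct (since $im_1\neq jm_1$ whenever $i\neq j$), so the $L_{i\bm}$ are distinct members of the basis $\{L_\bn\mid\bn\in\Z^2\}$ of $\BB(q)$ and hence linearly independent; consequently the $d_{i\bm}$ are linearly independent and $\phi$ is injective. Being a bijective bracket-preserving linear map, $\phi$ is a Lie algebra isomorphism, and its image $\W_\bm$ is a subalgebra of $\BB(q)$ isomorphic to $\W$.

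There is no genuine obstacle here: the entire content is the elementary cancellation yielding the coefficient $(j-i)m_1 q$, and the only places the hypotheses are used are the nonvanishing of $m_1 q$ (needed both to define the $d_{i\bm}$ and to rescale the bracket) and $m_1\neq0$ (needed for the distinctness of the indices $i\bm$ that guarantees linear independence).
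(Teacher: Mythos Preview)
Your proof is correct and is exactly the ``straightforward calculation'' the paper alludes to without writing out; the paper gives no further argument beyond that phrase. Your explicit cancellation $(jm_1)(im_2+q)-(im_1)(jm_2+q)=(j-i)m_1q$ and the ensuing rescaling are the whole content, and your check of linear independence via distinctness of the indices $i\bm$ is a welcome bit of care that the paper omits.
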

%

The theory of Harish-Chandra modules (weight modules with finite-dimensional weight spaces)
over Virasoro algebra (and hence over Witt algebra) 
are fairly well developed. 
Recently, some authors constructed many classes of simple non-Harish-Chandra modules, 
including simple weight modules with infinite-dimensional weight spaces (see
\cite{CGZ, CM, LLZ, LZ2, Z}) and simple non-weight modules (see
\cite{BM, LGZ, LLZ, LZ1, MW, MZ1, OW, TZ4, TZ1, TZ2, TZ3}).
Among these modules a class of non-weight modules play an important role
because of its close relations with many other weight and non-weight modules.
We first define such modules.

For any complex number $\l,\a\in\C$ with $\l\neq0$, set $\Omega(\l,\a)=\C[t]$, 
the polynomial space in the variable $t$, and define the action of $\W$ as 
\begin{equation}\label{Omega action}
d_if(t)=\l^i(t-i\a)f(t-i),\ \forall\ i\in\Z, f(t)\in\Omega(\l,\a).
\end{equation}


It is well known that
$\Omega(\lambda,\a)$ is simple if and only if $\a\in \C^*$ and that
if $\a=0,$ then $\Omega(\lambda,0)$ has a unique nonzero proper
submodule $t\Omega(\lambda,0)$, which has codimension $1$ (see
Theorem 13 of \cite{GLZ3} or Section 4.3 of \cite{LZ3}).

\begin{theorem}{\textrm{([TZ2])}}\label{omega} 
Any $\W$-module, which is free of rank $1$ when restricted to $U(\C d_0)$, is isomorphic to some
$\Omega(\l,\a)$, where $\l,\a\in\Z$ and $\l\neq0$. 
\end{theorem}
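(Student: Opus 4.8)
The plan is to reconstruct the entire $\W$-action from the single datum that $M$ is free of rank $1$ over $U(\C d_0)$. Fixing a free generator identifies $M$ with the polynomial algebra $\C[t]$ in such a way that $d_0$ acts as multiplication by $t$; the task is then to determine the operators $d_n$ for $n\neq0$. First I would use $[d_0,d_n]=nd_n$. Setting $g_n(t):=d_n\cdot 1\in\C[t]$ and noting that the shift $\sigma_n\colon f(t)\mapsto f(t-n)$ already obeys $t\,\sigma_n(f)-\sigma_n(tf)=n\,\sigma_n(f)$, the operator $d_n\sigma_n^{-1}$ commutes with multiplication by $t$ and is therefore itself multiplication by a polynomial, necessarily $g_n$. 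Hence
\[
d_n f(t)=g_n(t)\,f(t-n),\qquad n\in\Z,\ f\in\C[t].
\]

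Feeding this shape into $[d_m,d_n]=(n-m)d_{m+n}$ converts the whole bracket structure into the family of functional equations
\[
g_m(t)\,g_n(t-m)-g_n(t)\,g_m(t-n)=(n-m)\,g_{m+n}(t).
\]
The next step is to solve this system. First, no $g_n$ with $n\neq0$ can vanish: if some $d_n=0$, the relations force $d_{m}=0$ for all $m\neq 2n$, in particular $d_0=0$, contradicting freeness. Next I would control degrees. If $\deg g_1=0$, the $(m,n)=(1,-1)$ equation forces $\deg g_{-1}=2$, and then the $(2,-1)$ equation has a left-hand side of positive degree but a constant right-hand side, a contradiction; symmetrically $\deg g_{-1}\neq0$. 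With $\deg g_1,\deg g_{-1}\geq1$ the leading terms of the $(1,-1)$ equation cancel, and its next coefficient is a nonzero multiple of $t^{\deg g_1+\deg g_{-1}-1}$, which must equal $-2t$; hence $\deg g_1=\deg g_{-1}=1$. Solving the $(1,-1)$ equation for linear polynomials then yields $g_1(t)=\l(t-\a)$ and $g_{-1}(t)=\l^{-1}(t+\a)$ for some $\l\in\C^*$ and $\a\in\C$.

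Finally I would recover all remaining $g_n$ by induction. The $m=\pm1$ instances of the functional equation propagate the data, but with one genuine gap: $g_2$ and $g_{-2}$ are \emph{not} reached from $g_1,g_{-1}$ alone, reflecting that $d_{-1},d_0,d_1$ generate only an $\sl_2$ inside $\W$. I would obtain $g_{\pm2}$ by solving the first-order difference equations coming from the $(1,-2)$ and $(2,-1)$ relations and check that they equal $\l^{\pm2}(t\mp2\a)$; from there the $m=\pm1$ recursions, whose coefficients no longer vanish, give $g_n(t)=\l^n(t-n\a)$ for every $n$. Comparing with \eqref{Omega action} identifies $M$ with $\Omega(\l,\a)$.

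The main obstacle is this last step: breaking the $\sl_2$ barrier to pin down $g_{\pm2}$, and in particular ruling out spurious polynomial solutions of the relevant difference equations (resonances that could occur for special values of $\a$). I expect these to be eliminated only by invoking several of the functional equations simultaneously rather than any single one. The degree bookkeeping that produces $g_1$ and $g_{-1}$ is the other delicate point, since the top-degree terms always cancel and the entire argument lives in the subleading coefficients.
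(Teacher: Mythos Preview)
The paper does not supply its own proof of this theorem: it is quoted verbatim from \cite{TZ2} (Tan--Zhao) and used as a black box in the proof of Lemma~\ref{g_m}. There is therefore no argument in the present paper to compare your proposal against.

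That said, your outline is essentially the standard route taken in the literature (and, as far as one can reconstruct, the route in \cite{TZ2} and in Section~4.3 of \cite{LZ3}): reduce to $d_n f(t)=g_n(t)f(t-n)$ via the commutation with $d_0$, derive the functional equations
\[
g_m(t)\,g_n(t-m)-g_n(t)\,g_m(t-n)=(n-m)\,g_{m+n}(t),
\]
and solve them. Your degree argument for $g_{\pm1}$ can be made cleaner by first applying the $(n,-n)$ instance: the subleading coefficient of the left-hand side is $-n\,a_na_{-n}(\deg g_n+\deg g_{-n})$, which forces $\deg g_n+\deg g_{-n}=2$ and $a_na_{-n}=1$ in one stroke, bypassing the somewhat ad hoc use of the $(2,-1)$ relation before $g_2$ is known. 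The ``$\sl_2$ barrier'' you flag is real but harmless: the first-order difference equation $(t+\a)g_2(t+1)-(t+\a-2)g_2(t)=3\l^2(t-\a)$ has $\l^2(t-2\a)$ as a particular solution, and the associated homogeneous equation $sH(s+1)=(s-2)H(s)$ forces $H(0)=0$ and $H(k)=0$ for all $k\geq3$, so its only polynomial solution is zero. Once $g_{\pm2}$ are pinned down, the $m=\pm1$ recursions have nonvanishing coefficients and the induction closes exactly as you describe.
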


Similarly to the Witt algebra, we may define Omega module $\Omega(\bl,\a)$ over $\BB(q)$ and $\BB'(q)$ 
for any $\bl=(\l_1,\l_2)\in\C^2$ with $\l_1\l_2\neq0$ and $\a\in\C$. 
As a vector space $\Omega(\bl,\a)=\C[\p_1,\p_2]$, the polynomial space in two variables $\p_1,\p_2$,
the action of $\p_1, \p_2$ are the usual multiplication, and the $\BB'(q)$-actions of other elements 
are defined as
$$L_{m_1,m_2}f(\p_1,\p_2)=\l_1^{m_1}\l_2^{m_2}(\p_1(q\a+m_2)-m_1(\a+\p_2))f(\p_1-m_1,\p_2-m_2).$$

It is easy to see that the module $\Omega(\bl,\a)$ is a $U(\fh)$-free $\BB(q)$-module as well as $\BB'(q)$-modules of rank $1$, that is,
as a $U(\fh)$-module, $\Omega(\bl,\a)$ is a free module of rank $1$. In the next section, we will
show that these modules exhaust all modules with this property.
Here we first present some properties of these modules.

For any module $\Omega(\bl,\a)$, where $\bl=(\l_1,\l_2)\in\C^2$ with $\l_1\l_2\neq0$ and $\a\in\C$,
denote $\Omega'(\bl,\a)=\p_1\Omega(\bl,\a)+(q\a+\p_2)\Omega(\bl,\a)$. It is easy to see that
$\Omega'(\bl,\a)$ is a $\BB(q)$-submodule as well as a $\BB'(q)$-submodule of $\Omega(\bl,\a)$. The following theorem shows that

\begin{theorem}\label{simple} 
Suppose $\bl=(\l_1,\l_2)\in\C^2$ with $\l_1\l_2\neq0$ and $\a\in\C$, then
$\Omega'(\bl,\a)$ is the unique nonzero proper submodule of $\Omega(\bl,\a)$.
In particular, $\Omega'(\bl,\a)$ is a simple $\BB(q)$-module.
\end{theorem}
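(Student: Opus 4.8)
The plan is to reduce the entire statement to the single assertion that \emph{every nonzero $\BB(q)$-submodule $M$ of $\Omega(\bl,\a)$ contains $\Omega'(\bl,\a)$}. This reduction suffices. As a subspace of $\C[\p_1,\p_2]$, the space $\Omega'(\bl,\a)=\p_1\Omega(\bl,\a)+(q\a+\p_2)\Omega(\bl,\a)$ is precisely the ideal of polynomials vanishing at the point $(\p_1,\p_2)=(0,-q\a)$, so it has codimension $1$ and is in particular a nonzero proper submodule. Granting the assertion, any nonzero submodule $M$ then lies between $\Omega'(\bl,\a)$ and $\Omega(\bl,\a)$, and since these differ in dimension by $1$ we get $M=\Omega'(\bl,\a)$ or $M=\Omega(\bl,\a)$; hence $\Omega'(\bl,\a)$ is the unique proper nonzero submodule. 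Simplicity of $\Omega'(\bl,\a)$ follows as well, because a nonzero submodule of $\Omega'(\bl,\a)$ is a nonzero submodule of $\Omega(\bl,\a)$, hence contains $\Omega'(\bl,\a)$ and therefore equals it. The statements that $\Omega'(\bl,\a)$ is a submodule and that the quotient is one dimensional are the easy parts: each $L_{\bm}$ acts as multiplication by the linear coefficient occurring in its action followed by the shift $\p_i\mapsto\p_i-m_i$, and that linear coefficient lies in $\Omega'(\bl,\a)$ (it vanishes at the common zero of $\p_1$ and $q\a+\p_2$), so $L_{\bm}$ maps all of $\Omega(\bl,\a)$ into $\Omega'(\bl,\a)$ and acts as $0$ on the $1$-dimensional quotient.

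For the main assertion I would fix $0\neq f\in M$ and exploit the explicit operators. The crucial observation is that $L_{\bo}$ acts as multiplication by $q\a\,\p_1$, so when $\a\neq0$ the submodule $M$ is stable under multiplication by $\p_1$; similarly $L_{0,m_2}$ acts as multiplication by $(q\a+m_2)\p_1$ composed with a shift in $\p_2$, while $L_{m_1,0}$ acts by multiplication by a linear form composed with a shift in $\p_1$. First I would form suitable $\C$-linear combinations $\sum_j c_j L_{\bm_j}$, chosen so that the top-degree contributions of the several linear multipliers cancel; tracking the leading coefficient shows that the surviving lower-order operator acts on the top-degree part of $f$ essentially by multiplication by a linear form in $\p_2$ together with a finite difference in $\p_1$. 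Iterating such combinations lets me descend to a nonzero element of $M$ of minimal total degree and pin down its form. Because $L_{\bm}$ always pushes $f$ into $\Omega'(\bl,\a)$, I may assume this minimal element lies in $\Omega'(\bl,\a)$; I would then feed it back through multiplication by $\p_1$ (from $L_{\bo}$) and through the $\p_2$-shifts (from the $L_{0,m_2}$) to recover successively all monomials $\p_1^a(q\a+\p_2)^b$ with $(a,b)\neq(0,0)$, which span $\Omega'(\bl,\a)$.

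The main obstacle is that in each $L_{\bm}$ the multiplication by a linear form and the lattice shift are rigidly coupled, so one cannot directly isolate a pure multiplication or a pure shift: every produced element carries an extra factor of the linear form, which tends to push outputs prematurely into $\Omega'(\bl,\a)$ and obscures whether a genuine degree drop has occurred. The delicate point is therefore to choose the combinations $\sum_j c_j L_{\bm_j}$ so that the leading multiplier terms cancel exactly while a lower-order, nonvanishing term survives, and to verify by an explicit leading-coefficient computation that this surviving term is nonzero, so that the induction on degree really progresses rather than stalling. A second, separate point is the degenerate case $\a=0$, where $L_{\bo}=0$ and multiplication by $\p_1$ is no longer immediately available; there I would instead extract the two generators $\p_1$ and $q\a+\p_2$ directly from $L_{0,m_2}$ and $L_{m_1,0}$ for a single nonzero $m_2$ and $m_1$ respectively, and run the same descent, so that the conclusion $\Omega'(\bl,\a)\subseteq M$ holds uniformly in $\a$.
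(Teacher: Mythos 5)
Your opening reduction is fine and agrees with the paper: $\Omega'(\bl,\a)$ is the vanishing ideal of the point $(0,-q\a)$, hence has codimension $1$, and since every multiplier $\p_1(q+m_2)-m_1(q\a+\p_2)$ lies in that ideal, $\Omega'(\bl,\a)$ is a submodule with one-dimensional trivial quotient; so the whole theorem rests on the single assertion that every nonzero submodule $M$ contains $\Omega'(\bl,\a)$. But that assertion is exactly what you do not prove. The combinations $\sum_j c_j L_{\bm_j}$ are never exhibited, the descent to a minimal-degree element and the claim that its form can be ``pinned down'' are never carried out, and you yourself flag the crucial cancellation-plus-nonvanishing verification as ``the delicate point \dots to verify.'' Since this is the entire mathematical content of the theorem, the proposal is a plan, not a proof. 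For comparison, the paper needs no descent and no minimal element: for an \emph{arbitrary} nonzero $f\in M$ it applies $L_{-\bm}$, obtaining (after normalizing $\bl=(1,1)$) the element $(\p_1(q-m_2)+m_1(q\a+\p_2))\,f(\bp+\bm)\in M$ for all $(m_1,m_2)\in\Z^2$, regards this as a polynomial in the integer variables $m_1,m_2$ with coefficients in $\C[\p_1,\p_2]$, and uses a Vandermonde argument to place every coefficient in $M$; the extremal coefficients (of $m_1^{k+1}m_2^{l}$ and $m_1^{k}m_2^{l+1}$ for suitable corner exponents $(k,l)$ of $f$) are nonzero scalar multiples of $q\a+\p_2$ and $\p_1$. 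This one-shot extraction is precisely your ``cancel the leading multiplier terms'' idea made rigorous, and it shows the degree-descent induction you are worried about is unnecessary.

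There are also two concrete errors in your plan. First, $L_{\bo}$ acts as multiplication by $q\p_1$, not $q\a\p_1$: the displayed action in Section 2 contains a typo (with the factors $(q\a+m_2)$ and $(\a+\p_2)$ it fails the bracket relation and is inconsistent with $\ad L_{\bo}=q\p_1$); the action used throughout the paper is $L_{\bm}\cdot f=\bl^{\bm}(\p_1(q+m_2)-m_1(q\a+\p_2))f(\bp-\bm)$. Note that your first paragraph already tacitly uses this corrected action (otherwise the multipliers would vanish at $(0,-\a)$ rather than $(0,-q\a)$ and $\Omega'(\bl,\a)$ would not be a submodule), while your second paragraph uses the misprinted one, so the two halves of your argument concern different module structures. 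Under the correct action, multiplication by $\p_1$ preserves every submodule for \emph{every} $\a$, and your separate treatment of $\a=0$ is spurious. Second, the rebuilding step fails as described: $L_{0,m_2}$ is not a pure shift in $\p_2$ --- it carries the factor $(q+m_2)\p_1$ --- so $\p_1$-multiplications together with the operators $L_{0,m_2}$ only ever produce elements divisible by $\p_1$ and can never yield $(q\a+\p_2)^{b+1}$ from $(q\a+\p_2)^{b}$. What does work is $L_{m_1,0}$, which multiplies by $q\p_1-m_1(q\a+\p_2)$ and shifts only $\p_1$; given $\p_1,\,q\a+\p_2\in M$ one then obtains all $\p_1^{a}(q\a+\p_2)^{b}$ with $a+b\geq 1$ by an easy induction. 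So your endgame is repairable, but the core extraction step --- where the theorem actually lives --- is missing.
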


\begin{proof} 
By replacing $L_\bm$ with $L_\bm/\bl^\bm$, we may assume that $\bl=(1,1)$.
Let $W$ be a nonzero submodule of $\Omega(\bl,\a)$ and take any nonzero element $f(\bp)=\sum_{k=0}^m\sum_{l=0}^na_{k,l}\p_1^k\p_2^l\in W$.

Applying $L_{-m_1,-m_2}$ on $f(\bp)$, we see that 
$$\aligned
 &L_{-m_1,-m_2}f(\p_1,\p_2)\\
=&(q(\p_1+m_1\a)+(m_1\p_2-m_2\p_1))\sum_{k=0}^m\sum_{l=0}^na_{k,l}(\p_1+m_1)^k(\p_2+m_2)^l\\
=&(\p_1(q-m_2)+m_1(q\a+\p_2))\sum_{k=0}^m\sum_{l=0}^n\sum_{i=0}^k\sum_{j=0}^l{k\choose i}{l\choose j}a_{k,l}m_1^im_2^j\p_1^{k-i}\p_2^{l-j}\\
\endaligned$$ lies in $W$ for any $\bm=(m_1,m_2)\in\Z^2$.
Regarding this element as a polynomial in $m_1, m_2$ with
coefficients in $\Omega(\bl,\a)$,
and using the Vandermonde determinant we can deduce that each coefficient of $m_1^im_2^j$ of the elements lies in $W$. In particular, the coefficients of $m_1^{k+1}m_2^l$ and $m_1^km_2^{l+1}$ give
rise to $q\a+\p_2, \p_1\in W$. Hence $\Omega'(\bl,\a)\subseteq W$. 
Noticing that $\Omega(\bl,\a)/\Omega'(\bl,\a)$ is $1$-dimensional, we see that $W=\Omega(\bl,\a)$
or $W=\Omega'(\bl,\a).$
\end{proof}

\begin{theorem}\label{simple} 
Suppose $\bl=(\l_1,\l_2),\bmu=(\mu_1,\mu_2)\in\C^2$ with $\l_1,\l_2,\mu_1,\mu_2\neq0$ 
and $\a,\b\in\C$, then
$\Omega(\bl,\a)\cong \Omega(\bmu,\b)$ if and only $\bl=\bmu$ and $\a=\b$. 
Similarly, $\Omega'(\bl,\a)\cong \Omega'(\bmu,\b)$ if and only $\bl=\bmu$ and $\a=\b$.
\end{theorem}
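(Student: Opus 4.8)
The plan is to prove the isomorphism classification for both the modules $\Omega(\bl,\a)$ and their submodules $\Omega'(\bl,\a)$. The ``if'' direction is trivial, so the entire content lies in the ``only if'' direction. I would handle $\Omega(\bl,\a)$ first and then either deduce or reprove the statement for $\Omega'(\bl,\a)$.

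\textbf{Recovering the parameters from the module structure.} Suppose $\varphi\colon\Omega(\bl,\a)\to\Omega(\bmu,\b)$ is a $\BB'(q)$-module isomorphism. The key observation is that both modules are free of rank $1$ over $U(\fh)=\C[\p_1,\p_2]$, so $\varphi$ is determined by the image $\varphi(1)=g(\p_1,\p_2)$ of the generator, which must be a unit in $\C[\p_1,\p_2]$, hence a nonzero scalar; after rescaling I may assume $\varphi(1)=1$. First I would exploit the action of the Cartan $\fh=\C\p_2$ (recall $q\neq0$, so $\p_1=q^{-1}\ad L_\bo$ is inner and $\fh$ is just $\C\p_2$): since $\varphi$ is a $U(\fh)$-module map fixing the generator, it must be the identity map on the underlying space $\C[\p_1,\p_2]$. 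The real constraint then comes from comparing the two $\BB'(q)$-actions under this identity map. Applying $L_{m_1,m_2}$ to the generator $1$ in each module gives, in $\Omega(\bl,\a)$,
\begin{equation*}
L_{m_1,m_2}\cdot 1=\l_1^{m_1}\l_2^{m_2}\bigl(\p_1(q\b'+m_2)-m_1(\a+\p_2)\bigr)\big|_{\text{shifted}},
\end{equation*}
and the analogous expression with $(\bmu,\b)$ in $\Omega(\bmu,\b)$. Forcing these to agree as polynomials in $\p_1,\p_2$ for every $\bm\in\Z^2$ will pin down the parameters.

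\textbf{Extracting the equations.} Concretely, $L_{m_1,m_2}\cdot 1$ in $\Omega(\bl,\a)$ equals $\l_1^{m_1}\l_2^{m_2}\bigl(\p_1(q\a+m_2)-m_1(\a+\p_2)\bigr)$ evaluated on the shifted generator $1(\p_1-m_1,\p_2-m_2)=1$, i.e. simply the polynomial $\l_1^{m_1}\l_2^{m_2}\bigl(\p_1(q\a+m_2)-m_1(\a+\p_2)\bigr)$. Since $\varphi$ is the identity and an intertwiner, I require
\begin{equation*}
\l_1^{m_1}\l_2^{m_2}\bigl(\p_1(q\a+m_2)-m_1(\a+\p_2)\bigr)=\mu_1^{m_1}\mu_2^{m_2}\bigl(\p_1(q\b+m_2)-m_1(\b+\p_2)\bigr)
\end{equation*}
for all $(m_1,m_2)\in\Z^2$. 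Setting $m_1=0,m_2=1$ and comparing the coefficient of $\p_1$ gives $\l_2(q\a+1)=\mu_2(q\b+1)$; taking $m_2=0,m_1=1$ and comparing the coefficients of $\p_1$ and of $\p_2$ separately yields $\l_1 q\a=\mu_1 q\b$ and $\l_1=\mu_1$ (from the $-\p_2$ term, setting constants aside). Chaining such specializations, the pure exponential factor forces $\l_1^{m_1}\l_2^{m_2}=\mu_1^{m_1}\mu_2^{m_2}$ for all $\bm$, whence $\bl=\bmu$, and the remaining linear-in-$\a,\b$ comparisons then force $\a=\b$.

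\textbf{The submodule case and the main obstacle.} For $\Omega'(\bl,\a)$ the same strategy applies, but I cannot immediately identify $\varphi$ with the identity, since $\Omega'(\bl,\a)$ is no longer $U(\fh)$-free of rank $1$ (it is the codimension-$1$ submodule $\p_1\Omega+(q\a+\p_2)\Omega$ from Theorem~\ref{simple}). I expect the main obstacle to be controlling the isomorphism $\varphi\colon\Omega'(\bl,\a)\to\Omega'(\bmu,\b)$ without a free generator to track. The natural route is to observe that $\Omega'(\bl,\a)$ is simple (again by Theorem~\ref{simple}), and to recover the parameters intrinsically: the scalars $\l_1,\l_2$ are determined by the spectrum of the operators $L_{m,0}L_{-m,0}$ or by the exponential growth rate of the $L_\bm$-action on a fixed vector, while $\a$ is recovered from the precise description of the submodule $\Omega'$ as an eigenspace-type condition involving $q\a+\p_2$. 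Alternatively, and more cleanly, I would argue that any isomorphism $\Omega'(\bl,\a)\cong\Omega'(\bmu,\b)$ extends to an isomorphism of the one-dimensional-quotient extensions $\Omega(\bl,\a)\cong\Omega(\bmu,\b)$ — since each $\Omega'$ sits inside a unique such rank-$1$ module as its unique maximal submodule — thereby reducing the submodule statement to the already-established case. Verifying that the extension exists and is unique is the delicate point, but it follows from the universal property of the $U(\fh)$-free module together with the uniqueness in Theorem~\ref{simple}.
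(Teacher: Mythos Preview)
Your treatment of $\Omega(\bl,\a)$ is correct and essentially identical to the paper's proof: normalize $\varphi(1)=1$ using $U(\fh)$-freeness, deduce that $\varphi$ is the identity on $\C[\p_1,\p_2]$, and compare the polynomials $L_\bm\cdot 1$ on both sides. The paper does exactly this, just without writing out the specific choices of $\bm$.

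For $\Omega'(\bl,\a)$ the paper only says ``similar and we omit the details,'' so there is little to compare against; however, your proposed \emph{extension} route has a genuine gap. You assert that an isomorphism $\Omega'(\bl,\a)\cong\Omega'(\bmu,\b)$ extends to $\Omega(\bl,\a)\cong\Omega(\bmu,\b)$ because ``each $\Omega'$ sits inside a unique such rank-$1$ module as its unique maximal submodule,'' and that this uniqueness follows from some universal property. No such universal property has been established: the previous Theorem~\ref{simple} says $\Omega'$ is the unique proper submodule of $\Omega$, not that $\Omega$ is the unique rank-$1$ $U(\fh)$-free module containing $\Omega'$. Proving the latter essentially amounts to the very isomorphism classification you are trying to establish, so the argument is circular as stated. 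The direct approach (your first alternative, and presumably what the paper intends by ``similar'') is the one that works: $\varphi$ is still $\C[\p_1,\p_2]$-linear, hence is multiplication by a fixed element $h$ of the fraction field $\C(\p_1,\p_2)$ (both ideals being rank-$1$ torsion-free); tracking the action of suitable $L_\bm$ on, say, $\p_1\in\Omega'$ then recovers $\bl$ and $\a$ just as in the free case.
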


\begin{proof} We only need to prove the necessity. 
Note thta $\Omega(\bl,\a)=\Omega(\bmu,\b)=\C[\p_1,\p_2]$ as vector spaces and denote the action
of $\BB(q)$ on $\Omega(\bl,\a)$ by $\cdot$ and the action on $\Omega(\bmu,\b)$ by $\circ$. 
Let $\varphi:\Omega(\bl,\a)\rightarrow\Omega(\bmu,\b)$ be a $\BB(q)$-module isomorphism.
It is obvious that 
$$U(\C\p_1\oplus\C\p_2)\circ\varphi(1)=\varphi(U(\C\p_1\oplus\C\p_2)\cdot 1)=\C[\p_1,\p_2],$$
which implies that $\phi(1)$ is a nonzero constant. Without loss of generality, we may assume that
$\varphi(1)=1$ and hence $\varphi(f(\bp))=f(\bp)$ for any polynomial $f(\bp)\in\C[\p_1,\p_2]$.
Now for any $\bm=(m_1,m_2)\in\Z^2$, we have 
$$\bl^\bm(\p_1(m_2+q)-m_1(\p_2+q\a)=\phi(L_{\bm}\cdot 1)=L_\bm\circ 1=\bmu^\bm(\p_1(m_2+q)-m_1(\p_2+q\b),$$
yielding $\bl=\bmu$ and $\a=\b$, as desired. The proof of the result for $\Omega'(\bl,\a)$ and $\Omega'(\bmu,\b)$
is similar and we omit the details.
\end{proof}

\section{$U(\fh)$-free modules over $\BB(q)$}\label{main}

In this section, we will determine the $U(\fh)$-free modules of rank $1$ over the algebra $\BB(q)$ or $\BB'(q)$, 
where $\fh=\C\p_1\oplus\C\p_2$ is the Cartan subalgebra of $\BB'(q)$. 

Before stating our main result, we first give some technique lemmas. 

\begin{lemma}\label{poly} Let $F(X,Y)$ be a polynomial in two variables $X,Y$.
If $F(X,Y)-F(X, Y-c)=aX+bY$ for some constant $a,b,c\in\C, c\neq0$, then
$F(X,Y)=(f(X)+(2aX+bc)Y+bY^2)/2c$ for some polynomial $f(X)$ in X.
\end{lemma}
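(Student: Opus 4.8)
The plan is to treat the hypothesis as a first-order linear difference equation in the single variable $Y$, regarding $X$ as a parameter, and to exploit the fact that the difference operator $\Delta\colon F(X,Y)\mapsto F(X,Y)-F(X,Y-c)$ lowers the degree in $Y$ by exactly one whenever $c\neq0$. First I would write
$$F(X,Y)=\sum_{j=0}^N c_j(X)Y^j,\qquad c_N(X)\neq 0,$$
with each $c_j(X)\in\C[X]$ and $N=\deg_Y F$. Applying $\Delta$ and using $Y^j-(Y-c)^j=jc\,Y^{j-1}+(\text{lower order in }Y)$ for $j\geq1$ (and $0$ for $j=0$), the top-degree contribution comes solely from the $j=N$ term and equals $c_N(X)\,Nc\,Y^{N-1}$.

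The key step is the degree bound. Since $c\neq0$ and $c_N(X)\neq0$, the leading term $c_N(X)Nc\,Y^{N-1}$ is genuinely nonzero for $N\geq1$, so no cancellation can occur and $\deg_Y\bigl(F(X,Y)-F(X,Y-c)\bigr)=N-1$. The right-hand side $aX+bY$ has degree $1$ in $Y$, which forces $N-1\leq1$, i.e.\ $N\leq2$. Thus $F$ has the form $F(X,Y)=c_0(X)+c_1(X)Y+c_2(X)Y^2$.

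With this reduced form I would compute $\Delta$ explicitly using $Y^2-(Y-c)^2=2cY-c^2$ and $Y-(Y-c)=c$, obtaining
$$F(X,Y)-F(X,Y-c)=2c\,c_2(X)\,Y+\bigl(c\,c_1(X)-c^2c_2(X)\bigr).$$
Matching this against $aX+bY$ as polynomials in $Y$ (with coefficients in $\C[X]$) gives $2c\,c_2(X)=b$, hence $c_2(X)=b/(2c)$ is constant, and $c\,c_1(X)-c^2c_2(X)=aX$, hence $c_1(X)=(2aX+bc)/(2c)$. The coefficient $c_0(X)$ is unconstrained; setting $f(X)=2c\,c_0(X)$ collects everything into $F(X,Y)=\bigl(f(X)+(2aX+bc)Y+bY^2\bigr)/2c$, as claimed.

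The only genuinely substantive point is the degree bound in the second paragraph; everything else is linear bookkeeping. I expect the sole place demanding care is verifying that the leading coefficient $c_N(X)Nc$ cannot vanish, which is exactly where the hypothesis $c\neq0$ is used, and where one must be sure that contributions from distinct powers $Y^j$ cannot conspire to cancel the top term (they cannot, since only $j=N$ contributes in degree $N-1$).
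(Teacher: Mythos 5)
Your proof is correct and follows essentially the same route as the paper: write $F$ as a polynomial in $Y$ with coefficients in $\C[X]$, bound $\deg_Y F\leq 2$, and then match coefficients of the explicit difference $2c\,c_2(X)Y+\bigl(c\,c_1(X)-c^2c_2(X)\bigr)$ against $aX+bY$. The only difference is that the paper dismisses the degree bound as ``clear,'' whereas you actually justify it via the fact that the difference operator lowers $\deg_Y$ by exactly one; this is a welcome filling-in of detail, not a different approach.
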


\begin{proof} 
Suppose that $F(X,Y)=\sum_{i=0}^nf_i(X)Y^i$ for some polynomials $f_i(X)$ in $X$ and $n\in\Z_+$. 
It is clear that $f_i(X)=0$ for $i\geq 3$. So we may assume $n=2$. 
By the assumption of the lemma, we have
$$cf_1(X)+f_2(X)(2cY-c^2)=aX+bY,$$
forcing that $f_2(X)=b/2c$ and $f_1(X)=ac^{-1}X+b/2$, as desired.
\end{proof}

Let $M$ be a $\BB(q)$-module which is a free $U(\h)=\C[\p_1,\p_2]$-module of rank $1$ when restricted to $\fh$. Without loss of generality, we assume that $M=\C[\p_1,\p_2]$ and that the action of $\p_1$ and $\p_2$ are just left multiplications. To distinguish, we denote the module action by $\cdot$ and denote the polynomial multiplication by adjunction.

To be short, we denote $\bp=(\p_1,\p_2)$, $\bp-\bm=(\p_1-m_1, \p_2-m_2)$,
$\bm^\bot=(m_2,-m_1)$, $(\bm|\bn)=m_1n_1+m_2n_2$, $(\bp|\bm)=m_1\p_1+m_2\p_2$ for any $\bm=(m_1,m_2)$,
$\bn=(n_1,n_2)\in\Z^2$; in particular, we have $(\bp|\bm^\bot)=m_2\p_1-m_1\p_2$.

The following lemma is a direct consequence of the Lie bracket.

\begin{lemma}\label{f}
For any $f(\bp)=f(\p_1,\p_2)\in \C[\p_1,\p_2]$ and $\bm=(m_1,m_2)\in\Z^2$, we have
$$L_{\bm}\cdot f(\bp)=f(\bp-\bm)L_{\bm}\cdot 1.$$
\end{lemma}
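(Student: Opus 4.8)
The plan is to translate the Lie-algebra bracket between $\fh$ and $L_{\bm}$ into a shift identity for the module action, and then to promote that identity from the generators $\p_1,\p_2$ to an arbitrary polynomial $f(\bp)$ by induction. First I would record the infinitesimal relation. Since $M$ is a $\BB'(q)$-module and $[\p_i,L_{\bm}]=m_iL_{\bm}$ in $\BB'(q)$, the module axiom $x\cdot(y\cdot v)-y\cdot(x\cdot v)=[x,y]\cdot v$ gives, for every $v\in M$ and $i=1,2$,
$$\p_i\cdot(L_{\bm}\cdot v)=L_{\bm}\cdot(\p_i\cdot v)+m_iL_{\bm}\cdot v=L_{\bm}\cdot((\p_i+m_i)v).$$
Because $\p_1,\p_2$ act on $M=\C[\p_1,\p_2]$ by left multiplication, this says precisely that on the element $L_{\bm}\cdot v$ the operator $\p_i$ may be pulled through $L_{\bm}$ at the cost of the shift $\p_i\mapsto\p_i+m_i$.

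Next I would iterate this. A straightforward induction on the degree (equivalently, on the number of factors $\p_i$ applied) upgrades the single-variable shift to
$$g(\bp)(L_{\bm}\cdot v)=L_{\bm}\cdot(g(\bp+\bm)v)$$
for every polynomial $g\in\C[\p_1,\p_2]$, where $g(\bp+\bm)=g(\p_1+m_1,\p_2+m_2)$; here the left-hand side is read as the polynomial $g(\bp)$ acting by multiplication on $L_{\bm}\cdot v$. Finally I would specialize: taking $g(\bp)=f(\bp-\bm)$, so that $g(\bp+\bm)=f(\bp)$, and setting $v=1$, the displayed identity becomes
$$f(\bp-\bm)(L_{\bm}\cdot 1)=L_{\bm}\cdot(f(\bp)\cdot 1)=L_{\bm}\cdot f(\bp),$$
which is exactly the claim, once one recalls that $f(\bp)=f(\p_1,\p_2)\cdot 1$ and that multiplication of the polynomial $f(\bp-\bm)$ against the polynomial $L_{\bm}\cdot 1$ is the juxtaposition appearing in the statement.

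The computation is entirely routine, so there is no genuine obstacle; the only point requiring care is bookkeeping the direction of the shift, namely ensuring it is $\bp-\bm$ rather than $\bp+\bm$ in the final formula. This is settled unambiguously by the substitution $g(\bp)=f(\bp-\bm)$, and one could equally verify the base case $f=\p_i$ by hand to confirm the sign. I would present the two-step structure (generator relation, then induction) explicitly, since it isolates the single use of the module axiom and keeps the shift direction transparent.
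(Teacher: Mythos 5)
Your proof is correct and is exactly the argument the paper has in mind: the paper dismisses Lemma~\ref{f} as ``a direct consequence of the Lie bracket,'' and your two-step argument (the generator relation $\p_i\cdot(L_{\bm}\cdot v)=L_{\bm}\cdot((\p_i+m_i)v)$ from $[\p_i,L_{\bm}]=m_iL_{\bm}$, followed by induction to arbitrary polynomials and the substitution $g(\bp)=f(\bp-\bm)$) is precisely that consequence spelled out, with the shift direction handled correctly.
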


Now to determine the module structure of $M$, it suffices to determine the action of
$L_{\bm}$ on $1\in M$ for any $\bm\in\Z^2$. Suppose that $L_{\bm}\cdot 1=g_{\bm}(\bp)$ for some polynomials $g_{\bm}\in\C[\p_1,\p_2]$. Note that $g_{\bo}(\bp)=q\p_1$ since $L_\bo=q\p_1$.

\begin{lemma}\label{g_m}
For any $\bm=(m_1,m_2)\in\Z^2$, we have $g_{\bm}=\l_1^{m_1}\l_2^{m_2}(\p_1(q+m_2)-m_1(q\a+\p_2))$ for some 
$\bl=(\l_1,\l_2)\in\C^2$ with $\l_1,\l_2\neq0$ and $\a\in\C$.
\end{lemma}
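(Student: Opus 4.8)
The plan is to reduce everything to the single scalar data $g_{\bm}=L_{\bm}\cdot 1$ and to exploit one master functional equation. By the module axiom $L_{\bm}(L_{\bn}\cdot 1)-L_{\bn}(L_{\bm}\cdot 1)=[L_{\bm},L_{\bn}]\cdot 1$ together with Lemma \ref{f}, which gives $L_{\bm}\cdot g_{\bn}(\bp)=g_{\bn}(\bp-\bm)\,g_{\bm}(\bp)$, I obtain for all $\bm,\bn\in\Z^2$
\[
g_{\bn}(\bp-\bm)g_{\bm}(\bp)-g_{\bm}(\bp-\bn)g_{\bn}(\bp)=\bigl(n_1(m_2+q)-m_1(n_2+q)\bigr)g_{\bm+\bn}(\bp),
\]
with the initial datum $g_{\bo}=q\p_1$. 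Every subsequent step is an application of this identity for a cleverly chosen pair $(\bm,\bn)$.

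First I would determine $g_{\bm}$ along each line through a direction $\bm$ with $m_1\neq0$. By Lemma \ref{witt} the elements $L_{i\bm}$ span a copy $\W_{\bm}$ of the Witt algebra, in which $d_0$ acts as multiplication by $\p_1/m_1$. The key observation is that the linear form $(\bp|\bm^\bot)=m_2\p_1-m_1\p_2$ is invariant under the shift $\bp\mapsto\bp-\bm$, so after base-changing $M$ to the field $K_{\bm}=\C\bigl((\bp|\bm^\bot)\bigr)$ one gets a $\W_{\bm}$-module $K_{\bm}[\p_1]$ that is free of rank $1$ over $U(\C d_0)$. Applying the analogue of Theorem \ref{omega} over $K_{\bm}$ (legitimate since that classification holds over any field of characteristic $0$) yields
\[
g_{i\bm}=q\,\Lambda_{\bm}^{\,i}\bigl(\p_1-im_1A_{\bm}\bigr)
\]
for scalars $\Lambda_{\bm},A_{\bm}\in K_{\bm}$, compatibly with $g_{\bo}=q\p_1$. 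Polynomiality of $g_{\pm\bm}$ then forces $\Lambda_{\bm}$ to be a nonzero constant and $A_{\bm}\in\C[(\bp|\bm^\bot)]$.

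Next I would pin down the precise shape. Polynomiality alone leaves $A_{\bm}$ an arbitrary polynomial in $(\bp|\bm^\bot)$, so I feed the per-line formulas back into the master equation with $\bm$ and $\bn$ on \emph{different} lines. Reading off coefficients by the Vandermonde argument already used in the proof of Theorem \ref{simple}, the $m_2$-dependence collapses to a one-variable difference equation of exactly the form treated in Lemma \ref{poly}; that lemma forces $A_{\bm}$ to be affine in $(\bp|\bm^\bot)$ with the prescribed slope, and simultaneously forces the multiplicative law $\Lambda_{\bm}=\l_1^{m_1}\l_2^{m_2}$ for fixed $\l_1,\l_2\in\C^*$ and a single constant $\a$. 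Re-expressing in the coordinates $\p_1,\p_2$ gives, for every $\bm$ with $m_1\neq0$,
\[
g_{\bm}=\l_1^{m_1}\l_2^{m_2}\bigl(\p_1(q+m_2)-m_1(q\a+\p_2)\bigr).
\]

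Finally I would treat the abelian directions $m_1=0$, where the Witt trick is unavailable because the $L_{0,\ast}$ commute: here I recover $g_{0,k}$ from the master equation applied to a splitting $(0,k)=\bm'+\bn'$ with $\bm',\bn'$ having nonzero first coordinate (hence already determined), choosing the splitting so that the structure constant is nonzero; this produces $g_{0,k}=\l_2^{k}(q+k)\p_1$, matching the formula, after which one checks that the closed expression indeed solves the master equation for all $\bm,\bn$. I expect the third paragraph to be the main obstacle: the per-line data $(\Lambda_{\bm},A_{\bm})$ are produced independently for each line, and the real work is to reconcile them into one global exponential-affine form through the cross-line master equation and Lemma \ref{poly}, in particular ruling out the higher-degree choices of $A_{\bm}$ that mere polynomiality would allow.
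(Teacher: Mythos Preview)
Your per-line analysis is a genuinely different and in some ways cleaner route than the paper's. The paper works over $\C$ throughout: it applies Theorem~\ref{omega} only to the quotient $M/X_{\bm}M$ to obtain Claim~1, and then needs a separate highest-term argument (Claim~2) to show that the correction $\phi_{\bm}$ depends on $X_{\bm}$ alone. Your base change to $K_{\bm}=\C\bigl((\bp|\bm^\bot)\bigr)$ yields both conclusions at once: the Omega classification over $K_{\bm}$ gives $g_{i\bm}=q\Lambda_{\bm}^{\,i}(\p_1-im_1A_{\bm})$, and the integrality argument you sketch (content of $\p_1-m_1A_{\bm}$ over $\C[X_{\bm}]$ is $1$, so $\Lambda_{\bm}\in\C[X_{\bm}]$; likewise $\Lambda_{\bm}^{-1}$; and $A_{\bm}^2\in\C[X_{\bm}]$ from the product $g_{\bm}g_{-\bm}$) is correct.

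The gap is in your third paragraph. Neither the Vandermonde extraction of Theorem~\ref{simple} nor Lemma~\ref{poly} does what you claim. In the paper, Lemma~\ref{poly} is applied \emph{before} the per-line analysis, to the product $G_{\bm}(\bp)=g_{\bm}(\bp+\bm)g_{-\bm}(\bp)$, yielding \eqref{F h}; it is not the tool that bounds $\deg A_{\bm}$. The degree-$1$ bound (Claim~3) is obtained instead by the concrete specialisation $\bm=(-m,0)$, $\bn=(m,n)$ in the master equation and a careful comparison of the top $\p_2$-degrees of the three pieces $\psi_1,\psi_2,\psi_3$; the exponential law $\l_{\bm}=\l_1^{m_1}\l_2^{m_2}$ and $\a_{\bm}\equiv\a$ (Claim~4) then come from a further system of scalar equations \eqref{p_1}--\eqref{constant}. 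Your sentence ``the $m_2$-dependence collapses to a one-variable difference equation of exactly the form treated in Lemma~\ref{poly}'' does not correspond to any identity one actually obtains from the cross-line master equation, and you have not indicated which coefficients the Vandermonde argument would isolate. This is precisely the part you flagged as the main obstacle, and it needs a real argument.

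There is also a small boundary issue in your last step. For any splitting $(0,k)=\bm'+\bn'$ with $m_1'=-n_1'=a$, the structure constant is $-a(2q+k)$, so it vanishes identically when $2q\in\Z$ and $k=-2q$. Your recovery of $g_{0,k}$ therefore misses $g_{0,-2q}$; the paper handles this case separately at the end of Claim~4 by plugging $\bm=(0,-2q)$ back into \eqref{p_2} and \eqref{constant}.
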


\begin{proof} For any $\bm=(m_1,m_2), \bn=(n_1,n_2)\in\Z^2$ and $f(\bp)\in M$, we have 
\begin{equation}\label{[L,L]}\begin{split}
 (L_\bm L_\bn-L_\bn L_\bm)\cdot f(\bp)
&=[L_\bm, L_\bn]\cdot f(\bp)\\
&=(n_1(m_2+q)-m_1(n_2+q))L_{\bm+\bn}\cdot f(\bp).
\end{split}\end{equation}
Taking $f(\bp)=1$ in \eqref{[L,L]}, we have by Lemma \ref{f} that
\begin{equation}\label{gg}\aligned
  g_\bn(\bp-\bm)
& g_\bm(\bp)-g_\bm(\bp-\bn)g_\bn(\bp)\\
& =(n_1(m_2+q)-m_1(n_2+q))g_{\bm+\bn}(\bp),\ \forall\ \bm,\bn\in\Z^2.
\endaligned\end{equation}
Take $\bn=-\bm$ in the above equation, then we have
$$\aligned
 g_\bm(\bp+\bm)g_{-\bm}(\bp)-g_{-\bm}(\bp-\bm)g_\bm(\bp)=2m_1q^2\p_1,
\endaligned$$
for all $\bm\in\Z^2$, where we have used the fact $g_{\bo}(\bp)=q\p_1$. Set 
\begin{equation}\label{G g}
G_\bm(\bp)= g_\bm(\bp+\bm)g_{-\bm}(\bp),
\end{equation} 
then we have
\begin{equation}\label{G}
G_\bm(\bp)-G_\bm(\bp-\bm)=2m_1q^2\p_1.
\end{equation}

For any $\bm\in\Z^2$, we set that $X_\bm(\bp)=(\bp|\bm^\bot)$.
Note that $X_{i\bm}=iX_\bm$ for any $i\in\Z$ and
$X_\bm(\bp-\bn)= X_\bm-(\bn|\bm^\bot)$ for all $\bn\in\Z^2$.

In case $m_1=0, m_2\neq0$, the equations \eqref{G} and \eqref{G g} imply that both $G_\bm(\bp)$ and $g_\bm(\bp)$ are independent on $\p_2$ and hence polynomials in $X_\bm=m_2\p_1$. We have $G_\bm(\bp)=F_\bm(X_\bm)$ and $g_\bm(\bp)=f_\bm(X_\bm)$ for some polynomials $F_\bm(X_\bm)$ and $f_\bm(X_\bm)$ in $\C[X_\bm]$

Now suppose $m_1\neq0$, then it is clear that $\C[\p_1,\p_2]=\C[X_\bm,\p_1]$ as polynomial rings. 
So we have $G_\bm(\bp)= F_\bm(X_\bm,\p_1)$ and $g_\bm(\bp)= f_\bm(X_\bm,\p_1)$ for some polynomials 
$F_\bm(X_\bm,\p_1)$ and $f_\bm(X_\bm,\p_1)$ in $\C[X_\bm,\p_1]$.
Then the equations \eqref{G g} and \eqref{G} can be rewritten as
\begin{equation}\label{F f}\aligned
   F_\bm(X_\bm,\p_1)&=f_\bm(X_\bm,\p_1+m_1)f_{-\bm}(X_{-\bm},\p_1).
\endaligned\end{equation}
and
\begin{equation}\label{F}
F_\bm(X_\bm,\p_1)-F_\bm(X_\bm,\p_1-m_1)=2m_1q^2\p_1.
\end{equation}
By Lemma \ref{poly}, we get 
\begin{equation}\label{F h}
F_\bm(X_\bm,\p_1)=h_\bm(X_\bm)+q^2\p_1(\p_1+m_1)
\end{equation}
for some polynomial $h_\bm(X_\bm)$ in $X_\bm$.

Note that the equations \eqref{F f}, \eqref{F} and \eqref{F h} hold for all $\bm\in\Z^2$
if we write $f_\bo(X_\bo,\p_1)=g_\bo(\bp)=q\p_1$ and $F_\bo(X_\bo,\p_1)=q^2\p_1^2$ in case $\bm=\bo$, 
and $f_\bm(X_\bm,\p_1)=f_\bm(X_\bm)$ and $F_\bm(X_\bm,\p_1)=F_\bm(X_\bm)$ in case $m_1=0, m_2\neq0$.
In the following, we will determine the polynomial $f_\bm(X_\bm,\p_1)$ in through several claims.

\noindent{\bf Claim 1.} $f_\bm(X_\bm,\p_1)=q\l_\bm(\p_1-m_1\a_\bm)+\l_\bm\phi_\bm(X_\bm,\p_1)$ 
for some $\l_\bm,\a_\bm\in\C$ with $\l_\bm\neq0$, $\l_{i\bm}=\l_\bm^i, \a_{i\bm}=\a_\bm$ for $i\in\Z\setminus\{0\}$, and some polynomial $\phi_\bm(X_\bm,\p_1)\in \C[X_\bm,\p_1]$
such that $\phi_\bm(0,\p_1)=0$ if $m_1\neq0$.

The claim for $\bm=\bo$ is trivial by taking $\bl_\bo=1$, $\a_\bo$ arbitrary and $\phi_\bo(X_\bo,\p_1)=0$, since $f_\bo(X_\bo,\p_1)=q\p_1$. 

If $m_1=0, m_2\neq0$, then claim is already true since $X_\bm=m_2\p_1$ and 
$f_\bm(X_\bm,\p_1)$ is a polynomial in $X_\bm$. 
We can choose $\l_{0,1}, \a_{0,1}\in\C$ with $\l_{0,1}\neq0$, 
set $\l_{0,m_2}=\l_{0,1}^{m_2}$, $\a_{0,m_2}=\a_{\bo}$ for all $m_2\in\Z$ and finally 
take $\phi_\bm(X_\bm,\p_1)=\l_{\bm}^{-1}f_\bm(X_\bm,\p_1)-q\p_1$ for $\bm=(0,m_2)$.
Note that the choice of $\l_{0,1}\neq0$ and $\a_{\bo}$ are arbitrary and we will choose 
appropriate values for them later so that the expression of $f_\bm(X_\bm,\p_1)$ may be compatible with that in the case $m_1\neq0$.

Now suppose $m_1\neq 0$. By Lemma \ref{witt}, we have the subalgebra $\W_\bm$ spanned by 
$d_{i\bm}=\frac{1}{qm_1}L_{i\bm}$, which is isomorphic to the classical Witt algebra $\W$. 
Note that $\p_1=q^{-1}L_\bo=m_1d_\bo$, $[X_\bm,d_{i\bm}]=0$ and $[\p_1, d_{i\bm}]=im_1d_{i\bm}$ for all $i\in\Z$. 
It is obvious that $X_\bm^kM$ is a $\W_\bm$-submodules of $M$ for each $k\in\Z_+$. 
Hence $X_\bm^kM/X_\bm^{k+1}M$ is a quotient $\W_\bm$-module and the action of $\p_1$ is given by
the multiplication. On other words, $X_\bm^kM/X_\bm^{k+1}M$ is a $U(d_\bo)$-free module of rank $1$.
In particular, by Lemma \ref{omega}, we have a $\W_\bm$-module isomorphism 
$$\varphi_\bm:\ \Omega(\l_\bm,\a_\bm)=\C[t]\rightarrow M/X_\bm M,$$
where $\l_\bm, \a_\bm\in\C$ with $\l_\bm\neq0$ 
and the $\W_\bm$-module action of $\Omega(\l_\bm,\a_\bm)$ is given as in \eqref{Omega action} via the Li algebra isomorphism in Lemma \ref{witt}. 
It is clear that $\varphi_\bm(1)$ is the coset of some nonzero constant element in $M=\C[X_\bm,\p_1]$. 
Without loss of generality, we may assume that $\varphi_\bm(1)=1\ \mod\ X_\bm M$. 
By the definition of $\Omega(\l_\bm,\a_\bm)$, we see that 
$d_{i\bm}\cdot 1=\l_\bm^i(d_\bo-i\a_\bm)\ \mod X_\bm M$, or equivalently,
$$L_{i\bm}\cdot 1=q\l_\bm^i(\p_1-im_1\a_\bm)\mod X_\bm M,\ \forall\ i\in\Z.$$
That is, 
\begin{equation}\label{f_im}
f_{i\bm}(X_{i\bm},\p_1)=q\l_\bm^i(\p_1-im_1\a_\bm)\mod X_\bm M,\ \forall\ i\in\Z.
\end{equation}

Replacing $\bm$ with $j\bm, j\neq0$ and $i$ with $1$, we have 
$f_{j\bm}(X_{j\bm},\p_1)=q\l_{j\bm}(\p_1-jm_1\a_{j\bm})\mod X_{j\bm} M$.
Recalling $X_{j\bm}=jX_{\bm}$ for $j\neq0$, we have
$f_{j\bm}(X_{j\bm},\p_1)=q\l_{j\bm}(\p_1-jm_1\a_{j\bm})\mod X_{\bm} M$,
and comparing 
with \eqref{f_im}, we can get $\l_{j\bm}=\l_\bm^j$ and $\a_{j\bm}=\a_\bm$ for all $j\neq0$ 
and $\bm\in\Z^2$ with $m_1\neq0$. Set $\phi_\bm(X_\bm,\p_1)=\bl_\bm^{-1}f_\bm(X_\bm,\p_1)-q(\p_1-m_1\a_{\bm})\in X_\bm\C[X_\bm,\p_1]$. 
The claim follows.

\noindent {\bf Claim 2.} $\phi_\bm(X_\bm, \p_1)$ is a polynomial in $X_\bm$.

This claim for $m_1=0$ is clear. 

Assume $m_1\neq0$ in the following proof of Claim 2.
Substitute the expression for $f_\bm(X_\bm,\p_1)$ in Claim 1 into \eqref{F h} and using \eqref{F f}, we obtain
$$\aligned
  & \big(q(\p_1+m_1-m_1\a_\bm)+\phi_\bm(X_\bm,\p_1+m_1)\big)\big(q(\p_1+m_1\a_\bm)
      +\phi_{-\bm}(X_{-\bm},\p_1)\big)\\
= & h_\bm(X_\bm)+q^2\p_1(\p_1+m_1). 
\endaligned$$
This implies that 
\begin{equation}\label{h phi}\aligned
h_\bm(X_\bm)=
 &q(\p_1+m_1-m_1\a_\bm)\phi_{-\bm}(X_{-\bm},\p_1)+q(\p_1+m_1\a_\bm)\phi_\bm(X_\bm,\p_1+m_1)\\
 &\hskip5pt +\phi_\bm(X_\bm,\p_1+m_1)\phi_{-\bm}(X_{-\bm},\p_1)+m_1^2q^2\a_\bm(1-\a_\bm).
\endaligned\end{equation}
Recalling that $X_{-\bm}=-X_\bm$, we can regard the above formula as an equation of
polynomials in $X_\bm$ and $\p_1$, which are algebraically independent since $m_1\neq0$.

Suppose on the contrary that $\phi_\bm(X_\bm,\p_1)\phi_{-\bm}(X_{-\bm},\p_1)\notin\C[X_\bm]$.
Let $X_\bm^a\p_1^b$ and $X_\bm^c\p_1^d$ be the highest degree terms of $\phi_\bm(X_{\bm},\p_1)$ and 
$\phi_{-\bm}(X_{-\bm},\p_1)$ respectively, with respect to the lexicographical order given by requiring that  the degree of $\p_1$ is greater than that of $X_\bm$. Then $X_\bm^{a+c}\p_1^{b+d}$, $X_\bm^c\p_1^{d+1}$ and $X_\bm^a\p_1^{b+1}$ are the highest degree terms of $\phi_\bm(X_\bm,\p_1+m_1)\phi_{-\bm}(X_{-\bm},\p_1)$, $(\p_1+m_1-m_1\a_\bm)\phi_{-\bm}(X_{-\bm},\p_1)$ and $(\p_1+m_1\a_\bm)\phi_\bm(X_\bm,\p_1+m_1)$ respectively. Noticing that $a, c\geq 1$, we see that if $b,d\geq1$, then $\deg(X_\bm^{a+c}\p_1^{b+d})>\deg(X_\bm^c\p_1^{d+1})$ and $\deg(X_\bm^{a+c}\p_1^{b+d})>\deg(X_\bm^a\p_1^{b+1})$, impossible; if $b=0, d\geq1$, then $\deg(X_\bm^c\p_1^{d+1})>\deg(X_\bm^a\p_1^{b+1})$ and 
$\deg(X_\bm^c\p_1^{d+1})>\deg(X_\bm^{a+c}\p_1^{b+d})$, again impossible; and similarly for the case $b\geq1, d=0$. So we must have $b=d=0$ and hence $\phi_\bm(X_\bm,\p_1)\in\C[X_\bm]$ for all $\bm\in\Z^2$ with $m_1\neq0$. This claim is completed.

By the above claim, we will write $\phi_\bm(X_\bm)$ instead of $\phi_\bm(X_\bm,\p_1)$ and 
$f_\bm(X_\bm,\p_1)=q\l_\bm(\p_1-m_1\a_\bm)+\l_\bm\phi_\bm(X_\bm)$ in what follows.
Note that $\bl_\bo=1$, $\phi_\bo(X_\bo)=\bo$ and $\phi_\bm(X_\bm)\in X_\bm\C[X_\bm]$ for $m_1\neq0$.

\noindent{\bf Claim 3.} $\phi_\bm(X_{\bm})$ is a polynomial in $X_\bm$ of degree $1$ for any $\bm\neq (0,-2q)$.

Considering the terms involving $\p_1$ in the equation \eqref{h phi}, we see 
$\phi_{-\bm}(X_{-\bm})=-\phi_\bm(X_\bm)$ for all $\bm\in\Z^2$ with $m_1\neq0$ 
and this equation becomes
\begin{equation*}\aligned
h_\bm(X_\bm)=&-(\phi_{\bm}(X_{\bm})-qm_1(\a_\bm-1))(\phi_\bm(X_\bm)-qm_1\a_\bm),\ {\text if}\ m_1\neq0.\\
\endaligned\end{equation*}

Now taking $\bm, \bn\in\Z^2$ and substituting $g_\bm(\bp)=f_\bm(X_\bm,\p_1)$ and $g_\bn(\bp)=f_\bn(X_\bn,\p_1)$ in \eqref{gg}, we get
\begin{equation}\label{ff}\aligned
  &f_\bn(X_\bn-(\bm|\bn^\bot),\p_1-m_1)
 f_\bm(X_\bm,\p_1)-f_\bm(X_\bm-(\bn|\bm^\bot),\p_1-n_1)f_\bn(X_\bn,\p_1)\\
=&(n_1(m_2+q)-m_1(n_2+q))f_{\bm+\bn}(X_{\bm+\bn},\p_1).
\endaligned\end{equation}
Substitute $f_\bm(X_\bm,\p_1)=q\l_\bm(\p_1-m_1\a_\bm)+\l_\bm\phi_\bm(X_\bm)$ into the above equation, 
we get
\begin{equation}\label{phiphi}\aligned
  &\l_\bm\l_\bn\Big(q(\p_1-m_1-n_1\a_\bn)+\phi_\bn(X_\bn-(\bm|\bn^\bot))\Big)\Big(q(\p_1-m_1\a_\bm)+\phi_\bm(X_\bm)\Big)\\
 &-\l_\bm\l_\bn\Big(q(\p_1-n_1-m_1\a_\bm)+\phi_\bm(X_\bm-(\bn|\bm^\bot))\Big)\Big(q(\p_1-n_1\a_\bn)+\phi_\bn(X_\bn)\Big)\\
=&(n_1(m_2+q)-m_1(n_2+q))\l_{\bm+\bn}\Big(q(\p_1-(m_1+n_1)\a_{\bm+\bn})+\phi_{\bm+\bn}(X_{\bm+\bn})\Big).
\endaligned\end{equation}
for all $\bm,\bn\in\Z^2$. 
Taking $\bm=(-m,0), \bn=(m,n)$ with $m, n\in\Z\setminus\{0\}$ in \eqref{phiphi}, and noticing 
$\l_{-m,0}=\l_{m,0}^{-1}$, $\a_{-m,0}=\a_{m,0}$ by Claim 1 and $\phi_{-m,0}(X_{-m,0})=\phi_{-m,0}(-X_{m,0})=-\phi_{m,0}(X_{m,0})$, we get 
\begin{equation}\label{-m0 mn}\aligned
  &\Big(q(\p_1+m-m\a_{m,n})+\phi_{m,n}(X_{m,n}+mn)\Big)\Big(q(\p_1+m\a_{m,0})-\phi_{m,0}(X_{m,0})\Big)\\
 &-\Big(q(\p_1-m+m\a_{m,0})-\phi_{m,0}(X_{m,0}+mn)\Big)\Big(q(\p_1-m\a_{m,n})+\phi_{m,n}(X_{m,n})\Big)\\
=&\l_{m,n}^{-1}\l_{m,0}\l_{0,n}(mq+m(n+q))\Big(q\p_1+\phi_{0,n}(X_{0,n})\Big).
\endaligned\end{equation}
Substituting $X_{m,n}=n\p_1-m\p_2, X_{m,0}=-m\p_2$ and $X_{0,n}=n\p_1$ into the above equation, 
we obtain an equation of polynomials in $\p_1, \p_2$. 
Denote by $\psi(\p_1,\p_2)$ the polynomial on the left hand side of \eqref{-m0 mn}, 
which is independent of $\p_2$ since the expression on the right hand side of \eqref{-m0 mn} 
is a polynomial in $\p_1$.

Suppose that $\phi_{m,0}(-m\p_2)=\sum_{i=1}^ka_i\p_2^i$ and 
$\phi_{m,n}(n\p_1-m\p_2)=\sum_{j=1}^lb_j(\p_2+\p'_1)^j$, where $\p'_1=-n\p_1/m$ and $k,l\geq1, a_kb_l\neq0$.
The we can write  
$$\psi(\p_1,\p_2)=\psi_1(\p_1,\p_2)+\psi_2(\p_1,\p_2)+\psi_3(\p_1),$$
where
\begin{equation*}\aligned
\psi_1(\p_1,\p_2)
=&\phi_{m,0}(-m(\p_2-n))\phi_{m,n}(n\p_1-m\p_2)-\phi_{m,0}(-m\p_2)\phi_{m,n}(n\p_1-m(\p_2-n))\\
=&\sum_{i=1}^ka_i(\p_2-n)^i\sum_{j=1}^lb_j(\p_2+\p'_1)^j-\sum_{i=1}^ka_i\p_2^i\sum_{j=1}^lb_j(\p_2-n+\p'_1)^j,\\
\psi_2(\p_1,\p_2)
=&\phi_{m,n}(n\p_1-m(\p_2-n))q(\p_1+m\a_{m,0})-q(\p_1+m-m\a_{m,n})\phi_{m,0}(-m\p_2)\\
 &+\phi_{m,0}(-m(\p_2-n))q(\p_1-m\a_{m,n})-q(\p_1-m+m\a_{m,0})\phi_{m,n}(n\p_1-m\p_2)\\
\endaligned\end{equation*}
are polynomials in $\p_1, \p_2$ and 
\begin{equation*}\aligned
&\psi_3(\p_1,\p_2)=q^2(\p_1+m-m\a_{m,n})(\p_1+m\a_{m,0})-q^2(\p_1-m+m\a_{m,0})(\p_1-m\a_{m,n})\\
\endaligned\end{equation*}
are polynomials in $\p_1$.

If $k,l\geq 2$, then the coefficients of $\p_2^{k+l-1}$ in $\psi_1(\p_1,\p_2)$ and $\psi_2(\p_1,\p_2)$ 
are $na_kb_l(l-k)$ and $0$ respectively, forcing $k=l$. 
Now, assuming $k=l\geq2$, we see the coefficients of $\p_1'\p_2^{2k-2}$ in $\psi_1(\p_1,\p_2)$ and 
$\psi_2(\p_1,\p_2)$ are $-nka_kb_k\neq0$ and $0$ respectively, contradiction.
Thus we have $k=1$ or $l=1$.

If $k=1, l>1$, 
the coefficients of $\p_2^{l}$ in $\psi_1(\p_1,\p_2)$ and $\psi_2(\p_1,\p_2)$ 
are $na_1b_l(l-1)$ and $qmb_l$ respectively. Hence $na_1(l-1)+qm=0$.
Applying the above arguments with $m,n$ replaced by $-m,-n$ respectively, and noticing that
$\phi_{-m,0}(m\p_2)=-\phi_{m,0}(-m\p_2)=-\sum_{i=1}^ka_i\p_2^i$ and 
$\phi_{-m,-n}(m\p_2-n\p_1)=-\phi_{m,n}(n\p_1-m\p_2)=-\phi_{m,n}(n\p_1-m\p_2)$, 
we have 
$na_1(l-1)-qm=0$, contradiction.

Similarly, if $l=1, k>1$, 
the coefficients of $\p_2^{k}$ in $\psi_1(\p_1,\p_2)$ and $\psi_2(\p_1,\p_2)$ 
are $na_kb_1(1-k)$ and $-qma_k$ respectively. Hence $nb_1(1-k)-qm=0$.
Applying the above arguments with $m,n$ replaced by $-m,-n$ respectively, 
we have $nb_1(1-k)+qm=0$, contradiction.

So we must have $k=l=1$, that is,
$\phi_{m,n}(X_{m,n})$ is a degree-$1$ polynomial in $X_{m,n}$ if $m\neq0$.
At last \eqref{-m0 mn} implies that $\phi_{0,n}(X_{0,n})$ is also a degree-$1$ polynomial in $X_{0,n}$
if $n\neq-2q$. The claim holds.

\noindent{\bf Claim 4.} $\phi_{\bm}=X_\bm$ and $f_\bm(X_\bm,\p_1)=\bl^\bm q(\p_1-m_1\a)+\bl^\bm X_\bm$, 
where $\a\in\C$, $\bl=(\l_1,\l_2)$ for some nonzero $\l_1,\l_2\in\C$ and $\bl^\bm=\l_1^{m_1}\l_2^{m_2}$.

Now we can assume that $\phi_{\bm}=a_\bm X_\bm+b_\bm$ for suitable $a_\bm, b_\bm\in\C$ for all $\bm\neq(0,-2q)$. Note that $b_\bm=0$  for $m_1\neq0$ or $\bm=\bo$ by Claim 1. 
Since $X_\bo=0$, we may assume $a_\bo=1$.

Substitute these value into \eqref{phiphi} for  
$\bm, \bn\in\Z^2\setminus\{0\}$, we get
\begin{equation*}\aligned
  &\l_\bm\l_\bn\Big(q(\p_1-m_1-n_1\a_\bn)+a_\bn(X_\bn-(\bm|\bn^\bot))+b_\bn\Big)\Big(q(\p_1-m_1\a_\bm)+a_\bm X_\bm+b_\bm\Big)\\
 &-\l_\bm\l_\bn\Big(q(\p_1-n_1-m_1\a_\bm)+a_\bm(X_\bm-(\bn|\bm^\bot))+b_\bm\Big)\Big(q(\p_1-n_1\a_\bn)+a_\bn X_\bn+b_\bn\Big)\\
=&(n_1(m_2+q)-m_1(n_2+q))\l_{\bm+\bn}\Big(q(\p_1-(m_1+n_1)\a_{\bm+\bn})+a_{\bm+\bn}X_{\bm+\bn}+b_{\bm+\bn}\Big).
\endaligned\end{equation*}
for all $\bm,\bn\in\Z^2\setminus\{\bo\}$. 
Considering the coefficients of $\p_1, \p_2$ and constant term, we deduce
\begin{equation}\label{p_1}\aligned
&\Big(q+a_\bn n_2\Big)\Big(qn_1+a_\bm(\bn|\bm^\bot)\Big)-\Big(q+a_\bm m_2\Big)\Big(qm_1+a_\bn(\bm|\bn^\bot)\Big)\\
=&\l_\bm^{-1}\l_\bn^{-1}\l_{\bm+\bn}(n_1(m_2+q)-m_1(n_2+q))\Big(q+a_{\bm+\bn}(m_2+n_2)\Big),
\endaligned\end{equation}
\begin{equation}\label{p_2}\aligned
&a_\bn n_1\Big(qn_1+a_\bm(\bn|\bm^\bot)\Big)-a_\bm m_1\Big(qm_1+a_\bn(\bm|\bn^\bot)\Big)\\
=&\l_\bm^{-1}\l_\bn^{-1}\l_{\bm+\bn}(n_1(m_2+q)-m_1(n_2+q))(m_1+n_1)a_{\bm+\bn},
\endaligned\end{equation}
\begin{equation}\label{constant}\aligned
  &\Big(qn_1+a_\bm(\bn|\bm^\bot)\Big)\Big(qn_1\a_\bn-b_\bn\Big)-\Big(qm_1+a_\bn(\bm|\bn^\bot)\Big)\Big(qm_1\a_\bm-b_\bm\Big)\\
=&\l_\bm^{-1}\l_\bn^{-1}\l_{\bm+\bn}(n_1(m_2+q)-m_1(n_2+q))\Big(q(m_1+n_1)\a_{\bm+\bn}-b_{\bm+\bn}\Big).
\endaligned\end{equation}
Taking $n_1=-m_1\neq0$ in \eqref{p_2}, we have $a_\bm=a_\bn$ for all $\bm,\bn\in\Z^2$ with $m_1=-n_1\neq0$ and hence for all $\bm,\bn\in\Z^2$ with $m_1=n_1\neq0$.
Then taking $n_1=m_1\neq0, m_2\neq n_2$ in \eqref{p_2} and \eqref{p_1}, we can deduce 
$\l_\bm\l_\bn a^2_\bm=\l_{\bm+\bn}a_{\bm+\bn}$ and $\l_\bm\l_\bn a_\bm=\l_{\bm+\bn}$ respectively. 
Replacing $\bm,\bn$ with $2\bm, 2\bn$ respectively and noticing $\l_{2\bm}=\l_\bm^2,\l_{2\bn}=\l_\bn^2$, 
$\l_{2(\bm+\bn)}=\l_{\bm+\bn}^2$ and $a_{2\bm}=a_{\bm}$, 
we have $\l_{\bm}^2\l_{\bn}^2 a_{\bm}=\l_{\bm+\bn}^2$ and hence $\l_\bm\l_\bn=\l_{\bm+\bn}$ and 
$a_\bm=1$ for all $\bm,\bn\in\Z^2$ with $n_1=m_1\neq0, m_2\neq n_2$.

Taking $m_1,n_1,m_1+n_1\neq0$ in \eqref{p_2}, we get $\l_\bm\l_\bn=\l_{\bm+\bn}$ if $n_1(m_2+q)-m_1(n_2+q)\neq0$.
Taking $n_1=0$, $n_2\neq-q，-2q$ and $m_1\neq0$ in \eqref{p_2}, we get
\begin{equation*}\aligned
\l_\bm \frac{\l_\bn(q+n_2a_\bn)}{q+n_2}=\l_{\bm+\bn}.
\endaligned\end{equation*}
Combining these two facts, we can find $\l_1,\l_2\in\C\setminus\{0\}$ such that
$\l_\bm=\l_1^{m_1}\l_2^{m_2}$ if $m_1\neq0$ and $\frac{\l_\bn(q+n_2a_\bn)}{q+n_2}=\l_2^{n_2}$ 
if $n_1=0$, $n_2\neq-q,-2q$. Thus we have
$$f(X_\bn)=\l_\bn(q\p_1+a_\bn X_\bn+b_\bn)=\l_2^{n_2}(q\p_1+X_\bn+\l_2^{-n}\l_\bn b_\bn),\ \text{if}\ n_1=0, n_2\neq-q,-2q.$$
So by adjusting the value of $\l_\bn, a_\bn$ and $b_\bn$, we may assume that
$\l_\bn=\l_2^{n_2}$, $a_\bn=1$ for $\bn\in\Z^2$ with $n_1=0, n_2\neq-q,-2q$.
Taking $n_1+m_1=0, m_2+n_2=-q$ in \eqref{p_1}, we deduce $a_{0,-q}=1$
and $f_{0,-q}(X_{0,-q},\p_1)=\bl_{0,-q}b_\bm$ if $q\in\Z$. 
Adjusting the values of $\bl_{0,-q}$ and $b_{0,-q}$, we may assume that $\bl_{0,-q}=\l_2^{-q}$.

Take $m_1,n_1,m_1+n_1\neq0$ in \eqref{constant}, we get
\begin{equation*}
n_1(qn_1+(\bn|\bm^\bot))\a_\bn-m_1(qm_1-(\bn|\bm^\bot))\a_\bm=
(n_1+m_1)\big(q(n_1-m_1)+(\bn|\bm^\bot)\big)\a_{\bm+\bn}.
\end{equation*}
Replacing $\bm, \bn$ with $\bm+\bn$ and $-\bn$ respectively and noticing $\a_\bn=\a_{-\bn}$, we get
\begin{equation*}
(m_1+n_1)(q(m_1+n_1)+(\bn|\bm^\bot))\a_{\bm+\bn}-n_1(qn_1+(\bn|\bm^\bot))\a_\bn=
m_1\big(q(m_1+2n_1)+(\bn|\bm^\bot)\big)\a_{\bm}.
\end{equation*}
Canceling $\a_{\bm+\bn}$ from the above two equations, we deduce $(qn_1+(\bn|\bm^\bot))(\a_\bm-\a_\bn)=0$ and hence $\a_\bm=\a_\bn$ for $\bm,\bn\in\Z^2$ with $m_1,n_1,m_1+n_1,qn_1+(\bn|\bm^\bot)\neq0$.
For any $\bm,\bn\in\Z^2$ with $m_1,n_1\neq0$, we may find some $i,j\in\N$ such that $im_1+jn_1\neq0$ and $qjn_1+(j\bn|i\bm^\bot)\neq0$ and hence $\a_\bm=\a_{i\bm}=\a_{j\bn}=\a_\bn$. 
Then by adjusting the value $\a_{\bn}$ with $n_1=0$, we may assume that $\a_\bm=\a_\bn$ for all
$\bm,\bn\in\Z^2\setminus\{\bo\}$, see the remark in the second paragraph of Claim 2. 
Denote this value by $\a\in\C$, we then have $\a_\bm=\a$ for all $\bm\in\Z^2\setminus\{\bo\}$. 

At last, we take $n_1=-m_1\neq0$ in \eqref{constant},
and deduce 
$(n_1(m_2+q)-m_1(n_2+q))b_{\bm+\bn}=0$, which implies $b_{\bn}=0$ for $\bn\in\Z^2$ with $\bn\neq(0,-2q)$.
Taking $\bm=(0,-2q)$ in \eqref{p_2} and \eqref{constant}, we can easily deduce $b_{0,-2q}=0$ and 
$\bl_{0,-2q}(2a_{0,-2q}-1)=\l_2^{-2q}$ in case $2q\in\Z$. Hence 
$f_{0,-2q}(X_{0,-2q},\p_1)=\bl_{0,-2q}(1-2a_{0,-2q})q\p_1=\l_2^{-2q}(q\p_1+X_{0,-2q})$.
Adjusting the values of $\bl_{0,-2q}$ and $a_{0,-2q}$, we may assume that $\bl_{0,-2q}=\l_2^{-2q}$ and $a_{0,-2q}=1$.
Now we conclude that $f_\bm(X_\bm,\p_1)=\bl^\bm q(\p_1-m_1\a)+\bl^\bm X_\bm$ for $\bm\in\Z^2$.
This complete the proof this claim and the lemma. 
\end{proof}

By Lemma, we see that the action of $\BB(q)$ on the module $M$ can be written as 
$L_{\bm}\cdot 1=\bl^\bm(q\p_1-m_1\a)+\bl^\bm X_\bm$. So we have the following description.

\begin{theorem}\label{char}
Any $U(\fh)$-free module of rank $1$ over $\BB(q)$ or $\BB'(q)$ 
is isomorphic to some $\Omega(\bl,\a)$ for some $\bl=(\l_1,\l_2)\in\C^2$ with $\l_1,\l_2\neq0$ and 
$\a\in\C$.
\end{theorem}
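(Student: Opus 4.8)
The plan is to obtain Theorem~\ref{char} as an essentially immediate consequence of Lemma~\ref{g_m}, so the genuine work will already be behind us. Let $M$ be a $U(\fh)$-free $\BB'(q)$-module of rank $1$. Using freeness, I would first identify $M$ with $\C[\p_1,\p_2]$ in such a way that the free generator is $1$ and both $\p_1,\p_2$ act by left multiplication; this is exactly the normalization set up in the paragraph preceding Lemma~\ref{poly}, and it is harmless precisely because rank-$1$ freeness means $M\cong U(\fh)$ as a $U(\fh)$-module. Since $\fh=\C\p_1\oplus\C\p_2$, this simultaneously fixes the whole $\fh$-action on $M$ to be the multiplication action that likewise defines $\Omega(\bl,\a)$.

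Next I would invoke Lemma~\ref{f}, which collapses the entire module structure onto the single family $g_\bm=L_\bm\cdot 1$ via $L_\bm\cdot f(\bp)=f(\bp-\bm)g_\bm(\bp)$. Lemma~\ref{g_m} then determines each $g_\bm$ completely: there exist $\bl=(\l_1,\l_2)$ with $\l_1,\l_2\neq0$ and $\a\in\C$ so that $g_\bm=\bl^\bm\bigl(\p_1(q+m_2)-m_1(q\a+\p_2)\bigr)$ for all $\bm\in\Z^2$. Comparing with the defining action of $\Omega(\bl,\a)$ on its generator $1$, the two expressions for $L_\bm\cdot 1$ agree. Since Lemma~\ref{f} holds verbatim inside $\Omega(\bl,\a)$ as well, the action of every $L_\bm$ on an arbitrary $f(\bp)$ therefore coincides in $M$ and in $\Omega(\bl,\a)$.

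I would then conclude by exhibiting the isomorphism outright: the identity map $\C[\p_1,\p_2]\to\C[\p_1,\p_2]$, $f(\bp)\mapsto f(\bp)$, is a bijection, is $\fh$-linear because $\fh$ acts by multiplication on both sides, and intertwines the $L_\bm$-actions by the previous step; hence it is a $\BB(q)$-module isomorphism $M\cong\Omega(\bl,\a)$. As $\p_2\in\fh$ acts by multiplication in both modules and $\p_1=q^{-1}L_\bo$ is already accounted for, the same map is simultaneously a $\BB'(q)$-isomorphism, which settles both assertions of the theorem at once.

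As for the main obstacle, there is effectively none remaining at the level of Theorem~\ref{char}: all the difficulty is concentrated in Lemma~\ref{g_m}, in particular in Claims~3 and~4, where one must rule out higher-degree $\phi_\bm$ and then solve the recursions in \eqref{p_1}, \eqref{p_2} and \eqref{constant} to force the uniform values $\l_\bm=\l_1^{m_1}\l_2^{m_2}$, $a_\bm=1$, $b_\bm=0$ and $\a_\bm=\a$. The only genuine care needed for the theorem itself is bookkeeping: confirming that the normalization $M=\C[\p_1,\p_2]$ loses no generality, and that the single formula for $g_\bm$ matches the $\Omega(\bl,\a)$-action including the degenerate indices $\bm=(0,-q)$ and $\bm=(0,-2q)$ that demanded separate treatment within Lemma~\ref{g_m}.
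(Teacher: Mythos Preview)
Your proposal is correct and matches the paper's approach exactly: the paper derives Theorem~\ref{char} in a single sentence from Lemma~\ref{g_m}, and your write-up simply unpacks that sentence carefully (the normalization $M=\C[\p_1,\p_2]$, the reduction via Lemma~\ref{f} to the values $g_\bm=L_\bm\cdot 1$, and the comparison with the defining formula for $\Omega(\bl,\a)$). If anything, you have supplied more detail than the paper does, including the check that the identity map intertwines the $\p_2$-action so that the isomorphism is over $\BB'(q)$ as well.
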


\vskip1cm

\noindent X.G.: School of Mathematics and Statistics, Zhengzhou University,
Zhengzhou 450001, Henan, P. R. China. Email: guoxq@zzu.edu.cn

\noindent X.L.: School of Mathematics and Statistics, Zhengzhou University,
Zhengzhou 450001, Henan, P. R. China. Email: liuxw@zzu.edu.cn


\begin{thebibliography}{99}

\bibitem[ALZ]{ALZ} D. Adamovic, R. Lu and K. Zhao, Whittaker modules for the affine Lie algebra $A^{(1)}_1$. 
{\it Adv. Math.,} 289 (2016), 438--479.

\bibitem[BF]{BF} Y. Billig, V. Futorny, Classification of simple $W_n$-modules with
finite-dimensional weight spaces,  J. Reine Angew. Math. 720 (2016), 199--216.

\bibitem[Bl]{Bl} R. E. Block, The irreducible representations of the Lie algebra $\mathfrak{sl}(2)$ and of the Weyl algebra. {\it Adv. Math.} 39(1) (1981) 69--110.

\bibitem[BO]{BO} G. Benkart and M. Ondrus, Whittaker modules for generalized Weyl
algebras. {\it Represent. Theory,} 13 (2009), 141--164.

\bibitem[BM]{BM} P. Batra and V. Mazorchuk, Blocks and modules for Whittaker pairs. 
{\it J. Pure Appl. Algebra,} 215 (7) (2011) 1552--1568.

\bibitem[CG1]{CG} H. Chen and X. Guo, New simple modules for the Heisenberg-Virasoro algebra. 
{\it J. Algebra,} 390 (2013), 77--86.

\bibitem[CG2]{CG2} H. Chen and X. Guo, Non-weight modules over the Heisenberg-Virasoro algebra and the W algebra $W(2,2)$. {\it J. Algebra Appl.} 16 (2017), no. 5, 1750097, 16 pp.

\bibitem[CG3]{CG3} H. Chen and X. Guo, A new family of modules over the Virasoro algebra. 
{\it J. Algebra}, 457 (2016), 73--105.

\bibitem[CGZ]{CGZ} H. Chen, X. Guo and K. Zhao, Irreducible quasifinite modules over a class of Lie algebras of Block type. {\it Asian J. Math.,} 18 (2014), no. 5, 817--827.
 
\bibitem[CM]{CM} C. Conley and C. Martin, A family of irreducible representations of the Witt Lie algebra with infinite-dimensional weight spaces. {\it Compos. Math.,} 128 (2) (2001) 153--175.

\bibitem[CZ]{CZ}  Y. Cai and K. Zhao, Module structure on $U(\fh)$ for basic Lie superalgebras. 
{\it Toyama Math. J.}, 37 (2015), 55--72.

\bibitem[CTZ]{CTZ} Y. Cai, H. Tan and K. Zhao, Module structure on $U(\fh)$ for Kac-Moody algebras. 
Preprint.

\bibitem[DZ]{DZ} D. Djokovic and K. Zhao, Derivations, isomorphisms and second cohomology of generalized Block algebras, {\it Algebra Colloq.,} 3 (1996), 245--272.

\bibitem[GGS]{GGS} M. Gao, Y, Gao and Y. Su, Irreducible quasi-finite representations of a Block type Lie algebra. {\it Comm. Algebra,} 42 (2014), no. 2, 511--527.

\bibitem[GL1]{GL1} X. Guo and X. Liu, Whittaker modules over generalized Virasoro algebras. 
{\it Comm. Algebra,} 39 (2011), no. 9, 3222--3231.

\bibitem[GL2]{GL2}X. Guo and X. Liu, Whittaker modules over Virasoro-like algebra. 
{\it J. Math. Phys.,} 52 (2011), no. 9, 093504, 9 pp.

\bibitem[GLZ1]{GLZ1} X. Guo, R. Lu and K. Zhao, Classification of irreducible Harish-Chandra modules
over generalized Virasoro algebras, {\it Proc. Edinb. Math. Soci. (2),} 55 (2012), no. 3, 697--709.

\bibitem[GLZ2]{GLZ2} X. Guo, R. Lu and K. Zhao, Simple Harish-Chandra modules,
intermediate series modules, and Verma modules
over the loop-Virasoro algebra, {\it Forum Math.}, 23 (2011), 1029--1052.

\bibitem[GLZ3]{GLZ3} X. Guo, R. Lu and K. Zhao, Fraction representations and highest-weight-like representations of the Virasoro algebra. {\it J. Algebra}, 387 (1) (2013) 68--86.

\bibitem[GLLZ]{GLLZ} X. Guo, G. Liu, R. Lu and K Zhao, 
Simple Witt modules that are finitely generated over the Cartan subalgebra.
ArXiv:1705.03393.

\bibitem[GZ1]{GZ1} X. Guo and K. Zhao, Irreducible representations of untwisted affine Kac-Moody algebras. Arxiv:1305.4059.

\bibitem[GZ2]{GZ2} X. Guo and K. Zhao, Simple Kac-Moody modules which are locally finite over the positive part: twisted case. Preprint.

\bibitem[HCS]{HCS} J. Han, Q. Chen and Y. Su, Modules over the algebras $\mathcal{V}ir(a,b)$. Preprint.


\bibitem[Ko]{Ko} B. Kostant, On Whittaker vectors and representation theory.
{\it Invent. Math.,} 48 (1978), no. {\bf 2}, 101--184.



\bibitem[LG]{LG} G. Liu and X. Guo, Harish-Chandra modules over generalized Heisenberg-Virasoro algebras. 
{\it Israel J. Math.,} 204 (2014), no. 1, 447--468.

\bibitem[LiG]{LiG} X. Liu and X. Guo, Whittaker modules over loop Virasoro algebra. 
{\it Front. Math. China,} 8 (2013), no. 2, 393--410.

\bibitem[LGW1]{LGW1} X. Guo, J. Wang, X. Liu, New irreducible tensor product modules for the Virasoro algebra. ArXiv:1708.09747.

\bibitem[LGW2]{LGW2} X. Guo, J. Wang, X. Liu, New irreducible tensor product modules for the Virasoro algebra (II). ArXiv:1708.09748

\bibitem[LGZ]{LGZ} R. Lu, X. Guo and K. Zhao, Irreducible modules over the Virasoro algebra. 
{\it Doc. Math.,} 16 (2011), 709--721. 
 
\bibitem[LLZ]{LLZ} G. Liu, R. Lu and K Zhao, Irreducible Witt modules from Weyl modules and $\gl_n$-modules.
ArXiv:1612.00315.

\bibitem[LMZ]{LMZ} R. Lu, V. Mazorchuk and K. Zhao, Classification of simple weight modules over the 1-spatial ageing algebra. 
{\it Algebr. Represent. Theory,} 18 (2015), no. 2, 381--395. 

\bibitem[LWZ]{LWZ} D. Liu, Y. Wu and L. Zhu, Whittaker modules for the twisted
Heisenberg-Virasoro Algebra. {\it J. Math. Phys.} 51 (2010), no. 2, 023524, 12 pp.

\bibitem[LZ1]{LZ1} R. Lu and K. Zhao, Classification of irreducible weight modules
  over higher rank Virasoro algebras, {\it Adv. Math.}, 206 (2006), 630--656.

\bibitem[LZ2]{LZ2} R. Lu and K. Zhao, Classification of irreducible weight modules over the twisted
Heisenberg-Virasoro algebra, {\it Commun. Contemp. Math.}, 12 (2010), 183--205.

\bibitem[LZ3]{LZ3} R. Lu and K. Zhao, Irreducible Virasoro modules from irreducible Weyl modules. 
{\it J. Algebra}, 414 (2014) 271--287.


\bibitem[Mat1]{Mat2} O. Mathieu, Classification of irreducible weight modules. {\it Ann.
Inst. Fourier (Grenoble)} 50 (2000), no. 2, 537--592.

\bibitem[Mat2]{Mat3} O. Mathieu, Classification of Harish-Chandra modules over the
Virasoro Lie algebra. {\it Invent. Math.} 107 (1992), no. 2, 225--234.

\bibitem[MP]{MP} F. Martin and C. Prieto,  Construction of simple non-weight $\sl(2)$-modules of arbitrary rank. {\it J. Algebra}, 472 (2017) 172--194.

\bibitem[MW]{MW} V.~Mazorchuk and E. Weisner,
Simple Virasoro modules induced from codimension one subalgebras of the positive part.
{\it Proc. Amer. Math. Soc.}, 142(11) (2014) 3695--3703.

\bibitem[MZ1]{MZ1} V. Mazorchuk and K. Zhao,
Simple Virasoro modules which are locally finite over a positive part.
{\it Selecta Math. (N.S.)}, 20(3) (2014), 839--854.

\bibitem[MZ2]{MZ2} V. Mazorchuk and K. Zhao,
Characterization of simple highest weight modules. {\it Canad. Math. Bull.,} 56 (2013), no. 3, 606--614.

\bibitem[N1]{N1} J. Nilsson, Simple $\mathfrak{sl}_{n+1}$-module structures on $U(\mathfrak{h})$.
{\it J. Algebra}, 424 (2015), 294--329.

\bibitem[N2]{N2} J. Nilsson, $U(\mathfrak{h})$-free modules and coherent families,
{\it J. Pure Appl. Algebra}, 220 (2016), no. 4, 1475--1488.

\bibitem[O]{O} M. Ondrus, Whittaker modules for $U_q(\mathfrak{sl}_2)$. {\it J. Algebra,} 289
(2005), no. 1, 192--213.

\bibitem[OW]{OW} M. Ondrus and E. Wiesner, Whittaker modules for the Virasoro algebra.
{\it J. Algebra Appl.}, 8(3) (2009) 363--377.

\bibitem[S1]{S1} Y. Su, Classification of Harish-Chandra modules over
the higher rank Virasoro algebras, {\it Comm. Math. Phys.}, 240 (2003), 539--551.

\bibitem[S2]{S2} Y. Su, Quasifinite representations of a Lie algebra of
Block type, {\it J. Algebra}, 276 (2004), 117--128.

\bibitem[S3]{S3} Y. Su, Quasifinite representations of a family of
Lie algebras of Block type, {\it J. Pure Appl. Algebra}, 192 (2004), 293--305.

\bibitem[SXX]{SXX} Y. Su, C. Xia, and Y. Xu, Classification of quasifinite representations of a Lie algebra related to Block type, {\it J. Algebra,} 393 (2013), pp. 71--78.

\bibitem[SY]{SY} Y. Su and X. Yue, Classification of ℤ2-graded modules of intermediate series over a Block-type Lie algebra. {\it Commun. Contemp. Math.,} 17 (2015), no. 5, 1550059, 17 pp. 

\bibitem[TZ1]{TZ1} H. Tan and K. Zhao, Irreducible modules over Witt algebras $\W_n$ and over $\sl_{n+1}(\C)$. Preprint, arXiv: 1312.5539.

\bibitem[TZ2]{TZ2} H. Tan and K. Zhao, $\W^+_n$- and $\W_n$-module structures on $U(\fh)$. 
{\it J. Algebra}, 424 (2015), 357--375.

\bibitem[TZ3]{TZ3} H. Tan and K. Zhao, Irreducible Virasoro modules from tensor products. 
{\it Ark. Mat.}, 54 (2016), no. 1, 181--200.

\bibitem[TZ4]{TZ4} H. Tan and K. Zhao, Irreducible Virasoro modules from tensor products (II). 
{\it J. Algebra}, 394 (2013), 357--373.

\bibitem[WT]{WT} Q. Wang and S. Tan, Quasifinite modules of a Lie algebra related to Block type, {\it J. Pure Appl. Algebra,} 211 (2007), 596--608.

\bibitem[Y]{Y} S. Yanagida, Whittaker vectors of the Virasoro algebra in terms of Jack symmetric polynomial, {\it J. Algebra,} 333 (2011), 273--294.

\bibitem[Z]{Z} $\tilde{\mathcal{S}}_n$- and $\bar{\mathcal{S}}_n$-module structures on $U(\fh)$.
Preprint.

\end{thebibliography}
\end{document}